\newtheorem{Th}{Theorem}[section]
\newtheorem{Prop}[Th]{Proposition}
\newtheorem{Lem}[Th]{Lemma}
\theoremstyle{Remark}
\newtheorem{Rmk}[Th]{Remark}
\theoremstyle{Definition}
\newtheorem{Def}[Th]{Definition}
\theoremstyle{Corollary}
\newtheorem{Cor}[Th]{Corollary}
\theoremstyle{plain} 
\newcommand{\thisTheoremname}{}
\newtheorem{genericthm}[Th]{\thisTheoremname}
\newcommand{\RRR}{\mathbb{R}^3}
\newcommand{\R}{\mathbb{R}}
\newcommand{\Hone}{H^1(\mathbb{R}^3)}
\newcommand{\BV}{BV(\mathbb{R}^3)}
\newcommand{\Lone}{L^1(\mathbb{R}^3)}
\newcommand{\Loneloc}{L^1_{loc}(\mathbb{R}^3)}
\newcommand{\Lthreehalves}{L^\frac32(\mathbb{R}^3)}
\newcommand{\Lthreehalvesloc}{L^\frac32_{loc}(\mathbb{R}^3)}
\newcommand{\Ltwo}{L^2(\mathbb{R}^3)}
\newcommand{\Ltwoloc}{L_{loc}^2(\mathbb{R}^3)}
\newcommand{\Ltenthirds}{L^\frac{10}{3}(\mathbb{R}^3)}
\newcommand{\Lfourthirds}{L^\frac43 (\mathbb{R}^3)}
\newcommand{\Lfivehalves}{L^\frac52 (\mathbb{R}^3)}
\newcommand{\Linfty}{L^\infty(\mathbb{R}^3)}
\newcommand{\eps}{\varepsilon}
\newcommand{\Om}{\Omega}
\newcommand{\NN}{\mathbb{N}}
\newcommand{\Eeps}{\mathcal{E}_\eps}
\newcommand{\EepsV}{\mathcal{E}_\eps^V}
\newcommand{\EepsnV}{\mathcal{E}_{\eps_n}^V}
\newcommand{\EepsZ}{\mathcal{E}_\eps^Z}
\newcommand{\Ekz}{\mathcal{E}_\ek^0}
\newcommand{\EepsnZ}{\mathcal{E}_{\eps_n}^Z}
\newcommand{\EkV}{\mathcal{E}_\ek^V}
\newcommand{\EzV}{\mathcal{E}_0^V}
\newcommand{\Ez}{\mathcal{E}_0}
\newcommand{\EzZ}{\mathcal{E}_0^Z}
\newcommand{\Ezz}{\mathcal{E}_0^0}
\newcommand{\ezz}{e_0^0}
\newcommand{\ezV}{e_0^V}
\newcommand{\ezZ}{e_0^Z}
\newcommand{\eepsV}{e_\eps^V}
\newcommand{\eepsnV}{e_{\eps_n}^V}
\newcommand{\eepsnZ}{e_{\eps_n}^Z}
\newcommand{\HM}{\mathcal{H}^M}
\newcommand{\Hz}{\mathcal{H}_0^M}
\newcommand{\Hzp}{\mathcal{H}_{0,+}^M}
\newcommand{\BVStep}{BV(\RRR,\{0,\pm1\})}
\newcommand{\XX}{\BVStep}
\newcommand{\Xp}{\mathcal{X}_+^M}
\newcommand{\XM}{\mathcal{X}^M}
\newcommand{\Fz}{\mathcal{F}_0}
\newcommand{\FzV}{\Fz^V}
\newcommand{\FzZ}{\Fz^Z}
\newcommand{\BRM}{{\mathbb{B}_M}}
\newcommand{\MM}{\mathcal{M}}
\newcommand{\epk}{{\varepsilon_k}}
\newcommand{\ek}{{\epk}}
\newcommand{\rhk}{{\rho_k}}
\newcommand{\rhko}{{\rho^1_k}}
\newcommand{\One}{{\mathbbm{1}}}
\newcommand{\Ui}{\{u^i\}_{i=0}^\infty}
\newcommand{\supp}{\text{supp}\,{}}
\newcommand{\ktends}{\xrightarrow[k \rightarrow \infty]{}}
\newcommand{\ntends}{\xrightarrow[n \rightarrow \infty]{}}
\newcommand{\etends}{\xrightarrow[\eps \rightarrow 0^+]{}}
\newcommand{\beq}{\begin{equation}}
\newcommand{\eeq}{\end{equation}}
\newcommand{\bal}{\begin{align}}
\newcommand{\eal}{\end{align}}
\begin{document}


\title{Convergence of the TFDW Energy to the liquid drop Model}

\author{Lorena Aguirre Salazar}
\affiliation{Department of Mathematics and Statistics, McMaster University, 1280 Main Street West, Hamilton, Ontario, Canada}
\author{Stan Alama}
\affiliation{Department of Mathematics and Statistics, McMaster University, 1280 Main Street West, Hamilton, Ontario, Canada}
\author{Lia Bronsard}
\affiliation{Department of Mathematics and Statistics, McMaster University, 1280 Main Street West, Hamilton, Ontario, Canada}

\begin{abstract}

\begin{center}\textbf{Abstract}\end{center}
\baselineskip=15pt
We consider two nonlocal variational models arising in physical contexts.  The first is the Thomas-Fermi-Dirac-von Weizs\"{a}cker (TFDW) model, introduced in the study of ionization of atoms and molecules, and the second is the liquid drop model with external potential, proposed by Gamow in the context of nuclear structure.  It has been observed that the two models exhibit many of the same properties, especially in regard to the existence and nonexistence of minimizers.  We show that, under a ``sharp interface'' scaling of the coefficients, the TFDW energy with constrained mass $\Gamma$-converges to  the  liquid drop model, for a general class of external potentials.  Finally, we present some consequences for global minimization of each model.
\end{abstract}

\pacs{}
\maketitle

\baselineskip=16pt

\section{Introduction}

The Thomas-Fermi-Dirac-von  Weizs\"{a}cker (TFDW) theory is a variational model for ionization in atoms and molecules.  Minimizers $u\in H^1(\RRR)$ of the energy
\begin{align}\label{TFDW}
 E_{TFDW}(u) &= \int_{\RRR} \left(  c_{TF} |u|^{\frac{10}{3}} - c_{D} |u|^{\frac83 }
        + c_W |\nabla u|^2 - V|u|^2 \right) dx+D(\vert u\vert ^2,\vert u\vert^2)
\end{align}
where
\begin{align}D(f,g):=\frac12 \int_{\RRR}\int_{\RRR} {f(x)g(y)\over |x-y|} dx\, dy,\end{align}
subject to an $L^2$ constraint, $\|u\|_{\Ltwo}^2=M$, model electron density in an atom or molecule whose nuclei act via the electrostatic potential $V$, and total electron charge $M$   (see \cite{Lieb}.) The liquid drop model (with potential) is also a variational problem with physical motivations: for sets $\Om\subset\RRR$ of finite perimeter and given volume $|\Om|=M$, one minimizes  the energy
$$   E_{LD}(\Om) = \text{Per}_{\RRR}(\Om) - \int_{\Om} Vdx+ D(\mathbbm{1}_\Omega,\mathbbm{1}_\Omega) .  $$
Here, the first term represents the perimeter of $\partial\Om$, which may be calculated as the total variation of the measure $|\nabla \mathbbm{1}_\Om|$, with $\mathbbm{1}_\Om\in  BV(\RRR; \{0,1\})$.  When $V\equiv 0$, this is Gamow's problem, a simplified model for the stability of atomic nuclei   (see \cite{ChoksiMuratovTopaloglu}) .  The constraint value $M$ represents the number of nucleons bound by the strong nuclear force.  

As variational problems, the TFDW and liquid drop models have much in common.   Each features a competition between local attractive terms (gradient and potential terms) and a common non-local repulsive term.  As such, each problem is characterized by subtle problems of existence and nonexistence due to the translation invariance of the problem ``at infinity'':  for large values of the ``mass'' constraint $M$, minimizing sequences may fail to converge due to splitting of mass which escapes to infinity, the ``dichotomy'' case in the concentration-compactness principle of Lions~\cite{LionsConcentrationPart1}.  
 (See e.g., \cite{BonaciniCristoferi,CP,FigalliFuscoMaggiMillotMorini,FrankLieb,FrankNamVBosch,KnupferMuratov,KMN,Lions,LuOttoNon-existence,LuOttoLiquidDropBackground,NamVBosch}.)
While this similarity has been often remarked, and one often speaks of the liquid drop models as a sort of ``sharp interface'' version of TFDW, no direct analytic connection between the two has been made.  In this paper we prove that, after an appropriate  ``sharp interface'' scaling and normalization, the TDFW energy converges to the liquid drop model with potential,  within the context of $\Gamma$-convergence.  
  This result may seem a bit surprising, since in {\it bounded} domains $\Om\subset\RRR$ it is the Ohta-Kawasaki functional, arising in di-block copolymer models and with an $L^1(\Om)$ mass constraint, which $\Gamma$-converges to the nonlocal isoperimetric problem (which is a bounded domain form of the liquid drop model); see \cite{RenTruskinovsky,RenWei200,Muratov,ChoksiMuratovTopaloglu}.

In order to establish this connection we select the constants in the TFDW energy so as to set up a sharp interface limit.  We note that this choice of scaling is not physically natural for the application to ionization phenomena, but is motivated purely mathematically.  We introduce a length-scale parameter $\eps>0$, and choose constants $c_W={\eps\over 2}$, $c_{TF}=\frac{1}{2\eps}$ and $c_D=\frac{1}{\eps}$.  We note that for fixed $\eps$, the qualitative behavior of the minimization problem for TFDW is not affected by the specific choices of the constants $c_W,c_{TF}, c_D$, and the values we select here match the standard choice of constants in the liquid drop model.  In addition, we complete the square in the nonlinear potential by adding in a multiple of the constrained $L^2$ norm, which is a constant in the minimization problem and thus has no effect on the existence of minimizers or the Euler-Lagrange equations.  That is, the nonlinear potential is rewritten as,
$$   \int_{\RRR} \frac{1}{2\eps}\left( |u|^{\frac{10}{3}} - 2|u|^{\frac83}\right) dx
     =  \int_{\RRR} \frac{1}{2\eps} |u|^2 \left( |u|^{\frac23 }-1\right)^2 dx - {M\over 2\eps},  $$
where $M=\|u\|_{\Ltwo}^2$ according to the constraint.  Thus we recognize the triple well potential,
$$   W(u)  : = |u|^2 \left( |u|^{\frac23 }-1\right)^2, $$
vanishing at $|u|=0,1$, and a version of the TFDW energy of the rescaled and normalized form,
\begin{align}\label{EpsFnctnl}
\EepsV(u):=
\int_{\RRR}\left[\frac{\eps}{2} | \nabla u | ^2+ \frac{1}{2\eps}W(u)- V|u|^2\right]d {x}+ D(\vert u\vert ^2,\vert u\vert^2) , \quad \|u\|_{\Ltwo}^2=M.
\end{align}
As $\eps\to 0^+ $ we expect that sequences $ \{u_\eps\}_{\eps>0}$ of uniformly bounded energy should converge almost everywhere to one of the wells of the potential $W$, that is, in the limit $u(x)\in \{0,\pm 1\}$.  As $\EepsV(|u|)=\EepsV(u)$, we expect minimizers of $\EepsV$ to have fixed sign, but families $ \{u_\eps\}_{\eps>0}$ with bounded energy might well take both positive and negative values.  Hence, we define the limiting liquid drop functional for $u\in BV(\RRR; \{0, \pm 1\})$ as
\begin{align}\label{LDen}
\Ez^V(u)&:=\frac{1}{8}\int_{\RRR}|\nabla u| -\int_{\RRR} V\vert u\vert^2dx+ D(\vert u\vert ^2,\vert u\vert^2) .\end{align}
The first term is the total variation of the measure $|\nabla u|$, and for $u=\mathbbm{1}_{\Om}$ it measures the perimeter of $\partial\Om$.
If $u$ takes both values $\pm 1$, then 
$$  \int_{\RRR}  |\nabla u| = \int_{\RRR} |\nabla u_+| + |\nabla u_-| ,$$
which measures the perimeter of $\{x\in\RRR  \ | \ u(x)=1\}$ and that of $\{x\in\RRR  \ | \ u(x)=-1\}$, whereas the other terms yield the same value for $u$ and $\vert u\vert=u^2$.  

We make the following general hypotheses regarding the potential V:
\begin{align}\label{HypV}V\in L^{\frac{5}{2}}(\RRR)+ \Linfty \text{ and } V(x) \xrightarrow[| x |\rightarrow \infty]{}0 .\end{align}
We define domains for the functionals which incorporate the mass constraint,
\begin{gather}
    \HM:=\left\{u\in\Hone\, : \, \| u\|_{\Ltwo}^2=M\right\}, \\
    \XM:=\left\{u\in \XX \, : \,  \| u\|_{\Ltwo}^2=M\right\},
\end{gather}
and define the infimum values
$$   e^V_\eps(M) : = \inf\left\{ \EepsV(u) \, : \,  u\in\HM\right\}, \qquad
    \ezV (M) : = \inf\left\{ \EzV(u) \, : \,  u\in\XM\right\},
$$
for the constrained TFDW and liquid drop problems. In recognition of the subtlety of the existence problem for minimizers of both models (see \cite{CP}, \cite{NamVBosch}, \cite{AlamaBronsardChoksiTopalogluDroplet}, \cite{AAB}, and the excellent review article \cite{ChoksiMuratovTopaloglu}), the target space and $\Gamma$-limit must incorporate the concentration structure of minimizing sequences for the liquid drop model:  while minimizing sequences for either TFDW or liquid drop may not converge, they do concentrate at one or more mass centers, and if there is splitting of mass the separate pieces diverge away via translation.  We define the energy ``at infinity'',  $\Ezz(u)$, taking potential $V\equiv 0$, with infimum value $\ezz(M)$.  From this we then define the appropriate $\Gamma$-limit as 
\begin{align}\label{GmmLmmt}\FzV(\{u^i\}_{i=0}^\infty)&:=\begin{cases}\mathcal{E}_0^V(u^0)+\displaystyle\sum_{i=1}^\infty \mathcal{E}_0^0(u^i), &\{u^i\}_{i=0}^\infty\in \Hz, \\  \infty, & \textit{otherwise,}\end{cases}\end{align}
on the space of limiting configurations,
\begin{align}\Hz:=\left\{\{u^i\}_{i=0}^\infty\subset  \XX; \  \ \sum_{i=0}^\infty \int_{\RRR}|\nabla u^i|<\infty, \  \ \sum_{i=0}^\infty\| u^i\|_{\Ltwo}^2=M\right\}.\end{align}

%

We now state our convergence result, which is in the spirit of $\Gamma$-convergence but with respect to a notion of convergence which is suggested by Concentration-Compactness, given by \eqref{Cmpctnss}-\eqref{xfar}.

\begin{Th}\label{Th1}  
 $\Eeps^V$ $\mathit{\Gamma}-$converges to $\FzV$, in the sense that:
\begin{enumerate}[(i)]
\item (Compactness and Lower-bound) For any sequence $\ek\ktends  0^+ $, if 
 $\{u_\ek\}_{k\in\NN}\subset\HM$ and $\sup_{k}\mathcal{E}_{\ek}^V(u_\ek)<\infty$, then there exist a subsequence (still denoted $\ek$), a collection $\{u^i\}_{i=0}^\infty\in{\Hz}$, and translations 
 $\{x_k^i\}_{k\in\NN}\subset\RRR$, 
 so that
\begin{gather}\label{Cmpctnss} u_\ek(\cdot)- \left(u^0  +\sum_{i=1}^\infty u^i(\cdot-x_k^i)\right) \ktends0\textit{ in $\Ltwo$}, \\
\label{xfar}  |x_k^i|\ktends 0, \quad |  x_k^i-x_k^j | \ktends \infty,\quad i\neq j, \\
\label{LwrBnd}\FzV(\{u^i\}_{i=0}^\infty)\leq \liminf_{k\to\infty}\mathcal{E}_\ek^V(u_\ek).
\end{gather}
\item (Upper-bound) Given $\{u^i\}_{i=0}^\infty\in\Hz$ and any sequence $\ek\ktends  0^+ $, there exist functions $\{u_\ek\}_{k\in\NN}\subset\Hz$ and translations $\{x_{k}^i\}_{k\in\NN}\subset\RRR$, such that equations \eqref{Cmpctnss} and \eqref{xfar} hold, and
\begin{align}\label{UpprBnd}\FzV(\{u^i\}_{i=0}^\infty)\geq\limsup_{k\to\infty} \Eeps^V(u_\ek).\end{align}
\end{enumerate}\end{Th}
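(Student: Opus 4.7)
The plan is to exploit the fact that the local part of $\EepsV$, namely
\[
\int_{\RRR}\Bigl[\tfrac{\eps}{2}|\nabla u|^2+\tfrac{1}{2\eps}W(u)\Bigr]\,dx,
\]
is a Modica--Mortola functional whose triple-well $W$ vanishes on $\{0,\pm 1\}$. A direct 1D optimal-profile computation gives the surface tension between adjacent wells,
\[
\sigma=\int_0^1\sqrt{W(s)}\,ds=\int_0^1 s(1-s^{2/3})\,ds=\tfrac{1}{2}-\tfrac{3}{8}=\tfrac{1}{8},
\]
which matches the coefficient in $\EzV$; note that for $u\in\XX$ the total variation $\int|\nabla u|$ accounts separately for the perimeters of $\{u=1\}$ and $\{u=-1\}$, so a direct $-1\leftrightarrow 1$ transition is automatically charged $\tfrac{1}{4}$ per unit area, consistent with the two-interface cost. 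The remaining terms $-\!\int V|u|^2$ and $D(|u|^2,|u|^2)$ are continuous perturbations with respect to strong $L^{10/3}$- and $L^{12/5}$-convergence, using $V\in L^{5/2}+\Linfty$ and Hardy--Littlewood--Sobolev. Together with the $L^2$-mass constraint and translation invariance at infinity, this drives the profile decomposition recorded in~\eqref{GmmLmmt}.

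For part (i), start from $\sup_k\EepsnV(u_{\ek})<\infty$. Interpolation under the mass constraint bounds $\|u_{\ek}\|_{\Ltenthirds}$, so $\int V|u_{\ek}|^2$ is bounded; this gives $\int W(u_{\ek})\le C\ek$ and $\ek\int|\nabla u_{\ek}|^2\le C$. Standard Modica--Mortola compactness then produces $L^1_{loc}$-convergence along subsequences to limits in $\XX$. I apply iterated Concentration-Compactness to $|u_{\ek}|^2$: extract $u^0$ as the weak $L^2$-limit at $x_k^0\equiv 0$, peel off a divergent translation $x_k^1$ capturing the next profile $u^1$, and iterate. Orthogonality of translated masses gives $\sum_i\|u^i\|_{\Ltwo}^2\le M$, while the vanishing of the residual mass (hence the strong $L^2$ statement~\eqref{Cmpctnss}) follows from a Lions vanishing lemma coupled to the Modica--Mortola lower bound. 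For~\eqref{LwrBnd}, for each fixed $N$ cover $\RRR$ by disjoint balls $B_R(x_k^i)$, $0\le i\le N$: on each ball the localized Modica--Mortola lower bound yields $\tfrac{1}{8}\int_{B_R}|\nabla u^i|$; the potential splits into a center contribution $-\!\int V|u^0|^2$ and tails that vanish since $V(\cdot+x_k^i)\to 0$ for $i\ge 1$; the Coulomb functional passes to $\sum_i D(|u^i|^2,|u^i|^2)$ by strong $L^{12/5}$-convergence on compact sets together with~\eqref{xfar} killing the cross terms. Sending $R,N\to\infty$ completes the lower bound.

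For part (ii), first truncate the limiting configuration to $(u^0,\ldots,u^N,0,0,\ldots)$; the residuals $\sum_{i>N}\Ezz(u^i)$ and $\sum_{i>N}\|u^i\|_{\Ltwo}^2$ tend to $0$ by the summability built into $\Hz$. For each $u^i$, construct the classical Modica--Mortola recovery $v_{\ek}^i\in\Hone$ by inserting the 1D optimal transition profile across every interface of $u^i$, so that $v_{\ek}^i\to u^i$ in $L^p$ for every $p\in[1,\infty)$ and
\[
\int\Bigl[\tfrac{\ek}{2}|\nabla v_{\ek}^i|^2+\tfrac{1}{2\ek}W(v_{\ek}^i)\Bigr]\to\tfrac{1}{8}\int|\nabla u^i|.
\]
Choose translations $x_k^i$ diverging rapidly enough that the shifted supports $\supp v_{\ek}^i(\cdot-x_k^i)$ are pairwise disjoint, $|x_k^i|\to\infty$ for $i\ge 1$, and $|x_k^i-x_k^j|\to\infty$ for $i\neq j$; set $u_{\ek}=v_{\ek}^0+\sum_{i=1}^N v_{\ek}^i(\cdot-x_k^i)$. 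Correct the total mass to exactly $M$ by a small modification of $v_{\ek}^0$ (e.g.\ a multiplicative rescaling by $1+o(1)$ whose energetic cost is $o(1)$), then diagonalize in $N$. Convergence of the potential and Coulomb contributions uses disjointness of supports, $V(x+x_k^i)\to 0$, and the separation~\eqref{xfar}.

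The principal obstacle is reconciling the $L^2$ mass constraint with the Modica--Mortola recovery: each $v_{\ek}^i$ carries an $O(\ek)$ mass error per interface, and the corrective modification must not spoil the energy convergence. A secondary subtlety is the lower-bound direction for the Coulomb energy, which requires the profile decomposition to hold in a topology strong enough for the Hardy--Littlewood--Sobolev integral---typically strong $L^{12/5}$-convergence on compact sets combined with a Brezis--Lieb-type decomposition using~\eqref{xfar} to eliminate cross terms across far-apart bubbles. The rest is bookkeeping governed by the summability in $\Hz$.
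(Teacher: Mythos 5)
Your plan follows essentially the same architecture as the paper: a Modica--Mortola-type lower bound for the singularly perturbed local part, a concentration-compactness profile decomposition to capture mass escaping to infinity, positivity and Hardy--Littlewood--Sobolev control of the Coulomb term, and a Sternberg-type recovery built by superposition of well-separated components and diagonalized in the truncation level $N$. The surface-tension constant $\sigma=1/8$ is computed correctly, as is the remark about the double charge across a $-1\leftrightarrow1$ jump.

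There is, however, a genuine gap at the very start of part (i). You write that interpolation under the mass constraint bounds $\|u_{\ek}\|_{\Ltenthirds}$, then use this to control $\int V|u_{\ek}|^2$ and so deduce $\int W(u_{\ek})\le C\ek$. This is circular. Interpolating between $\|u_{\ek}\|_{L^2}^2=M$ and the Sobolev bound $\|u_{\ek}\|_{L^6}\le C\|\nabla u_{\ek}\|_{L^2}$ yields a constant that diverges as $\eps\to0$, since the energy bound only gives $\|\nabla u_{\ek}\|_{L^2}^2=O(\eps^{-1})$. In fact a uniform $L^{10/3}$ bound can only be obtained from the bound on $\int W$---which is precisely what you are trying to establish. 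The paper avoids this by estimating $\int V|u|^2$ directly via Young's inequality together with $|t|^{10/3}\le\frac53 W(t)$ for $|t|$ large (Lemma~\ref{TwrdsBV}), without any a priori $L^{10/3}$ control; the $L^{10/3}$ bound is then a corollary (Remark~\ref{RmkTwrdsBV}).

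A few further execution choices differ from the paper's, none fatal but each a potential source of extra work. The paper applies its Frank--Lieb-type concentration propositions to the composed function $\phi_\eps=\Phi(u_\eps)$, $\Phi(t)=\int_0^t\sqrt{W}$, because this carries a uniform $\BV$ bound; applying CC to $|u_\eps|^2$, which is merely bounded in $L^1$, still requires extracting the $\BV$ structure from $\phi_\eps$, and one also needs the truncation $|u_\eps|\le1$ so that $\phi_\eps$ is comparable to $|u_\eps|^2/8$---a step you omit. For the Coulomb lower bound, strong $L^{12/5}_{loc}$ convergence and cancellation of cross terms is more than is needed: positivity of the kernel lets you simply drop the cross terms and apply Fatou termwise, and the cross-term \emph{vanishing} is only needed for the upper bound, where the paper imposes the quantitative separation $|x_k^i-x_k^j|\ge\max\{4\bar{R}_\ek,\,2^k\}$. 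Finally, mass correction by a multiplicative factor $1+o(1)$ shifts the wells of $W$ and so introduces an extra bulk contribution in $\frac{1}{2\eps}\int W(\mu v_\eps)$, which one would have to estimate; the paper instead dilates, $\hat v(x)=v(\lambda x)$ with $\lambda\to1$, which fixes the $L^2$ norm exactly while rescaling the energy only by factors converging to $1$.
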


 We note that $u^0$ is the limit of $u_\eps$ in $L^2_{loc}(\RRR)$, and could well be zero.  However, it is natural to distinguish this component as it is the only one which ``feels'' the effect of $V$, and for minimizers when ($V\not\equiv 0$) it will be nontrivial.

The compactness and lower semicontinuity  (with respect to the notion of convergence given by \eqref{Cmpctnss}-\eqref{xfar}), combine two different approaches in the calculus of variations.  Local convergence of the singular limits uses BV bounds in the flavor of the Cahn-Hilliard problems, as studied in \cite{ModMort,Sternberg}.  On the other hand, the lack of global compactness imposes a concentration-compactness structure~\cite{LionsConcentrationPart1,Lions,FrankLieb,AlamaBronsardChoksiTopalogluDroplet}, in order to recover all of the mass escaping to infinity.  The proof of part (i) is done in section 2.

For the recovery sequence and upper bound (ii), the presence of an infinite number of $\Ui$ presents some obstacles not normally seen in Cahn-Hilliard-type problems, where the setting is usually a bounded domain or flat torus.  Indeed, for (ii) of Theorem~\ref{Th1} we must consider $\{ u^i\}_{i=0}^\infty$ with infinitely many nontrivial components, and then it is only possible at any fixed $\eps>0$ to construct a trial function approximating $u^i$ when the scale of its support is large compared to $\eps$.  This construction will be done in section 3. 

 While Theorem~\ref{Th1} expresses convergence of a family of variational problems in the spirit of de Giorgi's $\Gamma$-convergence, it does not fit the standard form defined in most texts on the subject, (see for example \cite{Braides}), since the topology of the convergence is not determined by the choice of a common underlying space which contains the domains of the functionals $\Eeps^V$ and $\FzV$.  More general notions of $\Gamma$-convergence have been introduced to allow for contexts in which there is no common ambient space; see \cite{JeSt} for instance.  This form of the $\Gamma$-limit, as a sum of disassociated variational problems splitting on different scales  was already introduced in droplet breakup for di-block copolymers; see \cite{CP,ABLW}.  

An important motivation behind de Giorgi's introduction of $\Gamma$-convergence was to understand the existence of, and relations between, minimizers of the functionals involved.
 In the following paragraph, we discuss the implications of our theorem to minimization problems in various settings, and the proofs of those results will be given in section 4.

\subsection*{On Minimizers}

Here we discuss the implications of the $\Gamma$-convergence result on minimizers of TFDW \eqref{TFDW} and of the liquid drop problem.  
For minimizers we note that $\EepsV(|u|)=\EepsV(u)$, $\EzV(|u|)=\EzV(u)$,  and so we restrict to the cone of nonnegative functions $\HM_+$, $\Xp$, $\Hzp$ as the domain for each.   Hence the triple-well nature of the potential $W(u)$ is not felt at all for energy minimizers, although it is an interesting question whether one can impose some constraint or min-max procedure which produces critical points which exploit the third well in a nontrivial way.

In some sense, one tends to think of a $\Gamma$-limit as a framework in which minimizers of the $\eps$ functionals should converge to minimizers of the limiting energy (see, e.g., \cite{KS}), but given the complexity of the question of the existence of minimizers for each model, this is a subtle point.
The notion of {\it generalized minimizers}, introduced for the case $V\equiv 0$ in \cite[Definition 4.3]{KMN}, provides a useful means of discussing the structure of minimizing sequences which may lose compactness:


\begin{Def}\label{genmin}  Let $V$ satisfy \eqref{HypV} and $M>0$.  A \underbar{generalized minimizer} of $\EzV(M)$ is a finite collection $\{u^0,u^1,\dots,u^N\}$, $u^i\in BV(\RRR,\{0,1\})$, such that:
\begin{enumerate}
\item  $\|u^i\|_{\Ltwo}^2:=m^i$, $i=0,1,\dots,N$, with $\sum_{i=0}^N m^i = M$;
\item  $u^0$ attains the minimum $\ezV(m^0)$ and $u^i$ attains $\ezz(m^i)$, $i=1,\dots,N$;
\item  $ \ezV(M)=  \ezV(m^0) + \sum_{i=1}^N \ezz(m^i)$.
\end{enumerate}
\end{Def}

In \cite{AlamaBronsardChoksiTopalogluDroplet} it is shown that to {\it any} minimizing sequence for the liquid drop model with (or without) potential $V$, one may associate a generalized minimizer as above.  In this way, up to translation ferrying the components $u^i$ to infinity, the collection of all generalized minimizers of $\EzV$ with constrained mass $M$ completely characterizes the minimizing sequences of $\EzV$.

We naturally associate to a generalized minimizer $\{u^0,u^1,\dots,u^N\}$ an element
$\{u^i\}_{i=0}^\infty$ of $\mathcal{H}_0^M$ by taking $u^i=0$ for all $i\ge N+1$, and then we have $\FzV (\{u^i\}_{i=0}^\infty)=  \ezV(M)$.  When convenient we abuse notation and denote $\FzV (\{u^i\}_{i=0}^N)$ the value of the limiting energy for a generalized minimizer.
We may thus address the convergence of minimizers of $\EepsV $ (should they exist) in terms of generalized minimizers of $\EzV $, using Theorem~\ref{Th1}:

\begin{Th}\label{minlim}
Let $M>0$ and assume that there exists $\eps_n\ntends 0^+ $ for which $e^V_{\eps_n}(M)$ is attained at $u_n\in \mathcal{H}_+^M$ for each $n\in\NN$.  Then, there exists a subsequence (not relabeled) and a generalized minimizer 
$\{u^0, \dots, u^N\}$ of $\EzV$ for which \eqref{Cmpctnss} and \eqref{xfar} hold for $i=0,\dots,N$, and 
$$ \FzV (\{u^i\}_{i=0}^{N})=  \ezV(M) = \lim_{n\to\infty} e^V_{\eps_n}(M).  $$
\end{Th}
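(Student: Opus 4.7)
The plan is to chain both halves of Theorem~\ref{Th1} together with the existence theory for generalized minimizers of the liquid drop model in \cite{AlamaBronsardChoksiTopalogluDroplet} and a standard subadditivity estimate on $\ezV$, sandwiching the minimizing sequence between matching lower and upper $\Gamma$-limit values and identifying its limit as a generalized minimizer.

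First I apply Theorem~\ref{Th1}(i) to $\{u_n\}$.  Energy boundedness of $\EepsnV(u_n)=\eepsnV(M)$ is available either by testing against a fixed smooth mass-$M$ trial function or, \emph{a posteriori}, via the recovery sequence produced in the next step.  Extracting a subsequence gives $\{u^i\}_{i=0}^\infty\in\Hz$ and translations satisfying \eqref{Cmpctnss}--\eqref{xfar} together with
\[
\FzV\bigl(\{u^i\}_{i=0}^\infty\bigr)\le \liminf_{n\to\infty}\eepsnV(M),
\]
and pointwise nonnegativity of $u_n$ passes to the limit so that each $u^i\in BV(\RRR;\{0,1\})$.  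Next, by \cite{AlamaBronsardChoksiTopalogluDroplet} the constrained minimization defining $\ezV(M)$ admits a generalized minimizer $\{v^0,\dots,v^{N'}\}$, which I embed in $\Hz$ by padding with zeros.  Applying Theorem~\ref{Th1}(ii) then yields a recovery sequence $\{v_n\}$ with $\limsup_n\EepsnV(v_n)\le \FzV(\{v^j\})=\ezV(M)$, and minimality of $u_n$ gives $\limsup_n\eepsnV(M)\le \ezV(M)$.

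To close the loop, write $m^i:=\|u^i\|_{\Ltwo}^2$, so $\sum m^i=M$.  A standard ``send the pieces to infinity'' argument---placing compactly supported near-minimizers of $\ezV(m^0)$ and of each $\ezz(m^i)$ at mutually distant locations and using the decay of $V$ and of the Coulomb cross-terms via \eqref{HypV}---produces the subadditivity
\[
\ezV(M)\le \ezV(m^0)+\sum_{i\ge 1}\ezz(m^i).
\]
Combined with the definitional lower bounds $\EzV(u^0)\ge \ezV(m^0)$ and $\Ezz(u^i)\ge \ezz(m^i)$, and with the $\liminf$/$\limsup$ bounds already obtained, this forces equality throughout: $u^0$ attains $\ezV(m^0)$, each $u^i$ with $i\ge 1$ attains $\ezz(m^i)$, and $\FzV(\{u^i\})=\ezV(M)=\lim_n\eepsnV(M)$.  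Finiteness of the number of nontrivial $u^i$ follows from the concentration-compactness mechanism underlying Theorem~\ref{Th1}(i), which extracts bubbles carrying a uniformly positive amount of mass, together with $\sum m^i=M$; relabelling and truncation yield the required finite list $\{u^0,\dots,u^N\}$.

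The main difficulty lies not in the $\Gamma$-convergence bookkeeping, which is essentially a chain of prepared inequalities, but in the supporting subadditivity estimate on $\ezV$: one must verify that translated far-away components incur no asymptotic cost in either the external potential (which vanishes at infinity by \eqref{HypV}) or the Coulomb interaction (whose cross-terms decay as the translations diverge), and that compactly supported near-minimizers with arbitrarily small additional error are available.  These ingredients are standard in the concentration-compactness framework but require careful checking in the present mixed perimeter/Coulomb/potential setting.
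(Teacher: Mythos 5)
Your overall strategy matches the paper's: squeeze $\lim_n e^V_{\eps_n}(M)$ between the lower bound from Theorem~\ref{Th1}(i) and an upper bound from Theorem~\ref{Th1}(ii), invoke the binding (subadditivity) inequality to force $\FzV(\{u^i\})=\ezV(M)$ and equality termwise, and conclude that each $u^i$ is a minimizer of the corresponding constrained problem. The minor deviation --- you construct the recovery sequence from a known generalized minimizer of the liquid drop model (via \cite{AlamaBronsardChoksiTopalogluDroplet}), whereas the paper applies Theorem~\ref{Th1}(ii) to $\{u^i\}$ itself and then quotes Lemma~\ref{lem:infs} --- is fine and gives the same chain of inequalities.

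However, there is a genuine gap in your finiteness step. You assert that the concentration-compactness mechanism behind Theorem~\ref{Th1}(i) ``extracts bubbles carrying a uniformly positive amount of mass,'' so that $\sum_i m^i=M$ forces only finitely many nontrivial $u^i$. That premise is false: the construction of the $u^i$ in Step~5 of the proof of Theorem~\ref{Th1}(i) only guarantees that the residual profile captures at least half of the maximal $L^1_{loc}$-concentration of the remainder, a quantity which is explicitly allowed to tend to $0$, and indeed the whole point of the $\MM(\{\phi^i_\ek\})$ argument there is to control the case where the iteration produces infinitely many pieces of vanishing mass. So boundedness from below of the $m^i$ is not available, and your truncation cannot be justified this way. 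The correct argument, used in the paper (Lemma~\ref{minlem}), is to exploit the strict subadditivity $\ezz(m^j)+\ezz(m^{j+1})>\ezz(m^j+m^{j+1})$ for $0<m^j,m^{j+1}<\mu_0$, which comes from the quantitative isoperimetric inequality (Proposition~\ref{prop:LO}): if infinitely many $m^i>0$ then eventually two consecutive masses lie below $\mu_0$, and strict subadditivity contradicts the equality $\ezV(M)=\ezV(m^0)+\sum_{i\ge1}\ezz(m^i)$ that you have already established. Without invoking this strict-subadditivity mechanism, the conclusion that the generalized minimizer is a finite collection does not follow.
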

A slightly more general version of Theorem \ref{minlim} will be proven in Lemma~\ref{minlem}.

There is a special class of potentials $V$ for which the existence problem $\inf \mathcal{E}_\eps^V$ is completely understood for each $\eps$; namely, $V$ of \underbar{long-range}, which are potentials that satisfy 
\begin{equation}\label{Long}
  \liminf_{t\to\infty}t\left(\inf_{ |  x  | =t}V( x )\right)=\infty.
\end{equation}
For example, the homogeneous potentials $V^\nu( x )=  |  x | ^{-\nu}$ are of long-range for $0<\nu<1$.  For  $V\in \Lthreehalves+\Linfty $ satisfying \eqref{Long} it is known that the global minimum is attained for any $M>0$, for both the TFDW and liquid drop functionals~\cite[Theorems 1 and 2]{AlamaBronsardChoksiTopalogluLongRange}.  For this class of problem, we then obtain the global convergence of minimizers in $L^2$ norm:

\begin{Cor}\label{cor:super}
Assume $V$ satisfies \eqref{HypV} and \eqref{Long}, and for $M>0$ and $\eps>0$,  let $u_\eps\in \mathcal{H}_+^M$ be a minimizer of $ \eepsV(M)$.  Then, for any sequence $\eps_n\ntends 0^+ $ there exists a subsequence (not relabeled) and a minimizer $u^0\in \Xp$ of $ \ezV(M)$ with $u_{\eps_n}\ntends u^0$ in $\Ltwo$.
\end{Cor}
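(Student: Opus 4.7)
The plan is to apply Theorem~\ref{minlim} to the family $\{u_{\eps_n}\}$ of minimizers to extract, along a subsequence, a generalized minimizer $\{u^0, u^1, \ldots, u^N\}$ of $\EzV$ with masses $m^i := \|u^i\|_{\Ltwo}^2$ summing to $M$, translations $\{x_n^i\}$ satisfying \eqref{Cmpctnss}--\eqref{xfar}, and (by Definition~\ref{genmin}(3))
\begin{align*}
\ezV(M) = \ezV(m^0) + \sum_{i=1}^N \ezz(m^i).
\end{align*}
The goal then reduces to showing that the long-range hypothesis \eqref{Long} forces $m^i = 0$ for every $i \geq 1$: once this is established, \eqref{Cmpctnss} collapses to $u_{\eps_n} \to u^0$ in $\Ltwo$, with $u^0 \in \Xp$ attaining $\ezV(M)$ by Definition~\ref{genmin}(2).

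The main step is to derive a contradiction from assuming $m^1 > 0$. Each $u^i$ attains its respective infimum and, by standard regularity for liquid drop minimizers, may be taken to have compact support, say in $B(0, R_i)$. For $|x_t| = t > R_0 + R_1$, the trial $w_t := u^0 + u^1(\cdot - x_t)$ has disjoint supports and mass $m^0 + m^1$, and a direct expansion yields
\begin{align*}
\EzV(w_t) = \ezV(m^0) + \ezz(m^1) - \int_{\RRR} V(x) |u^1(x - x_t)|^2\, dx + 2 D(|u^0|^2, |u^1(\cdot - x_t)|^2).
\end{align*}
Hypothesis \eqref{Long} gives $\int V(x)|u^1(x - x_t)|^2\, dx \geq C(t)\, m^1 / t$ with $C(t) \to \infty$ as $t \to \infty$, while disjointness of supports yields $2 D(|u^0|^2, |u^1(\cdot - x_t)|^2) \leq m^0 m^1 / (t - R_0 - R_1)$. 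Hence for $t$ large enough that $C(t) > m^0 + 1$, we obtain $\EzV(w_t) < \ezV(m^0) + \ezz(m^1)$, so that $\ezV(m^0 + m^1) < \ezV(m^0) + \ezz(m^1)$. Combining this with the standard subadditive construction---placing a minimizer of $\ezV(m^0 + m^1)$ (which exists by \cite{AlamaBronsardChoksiTopalogluLongRange}) together with minimizers of $\ezz(m^i)$ for $i \geq 2$ translated far away in separating directions---gives
\begin{align*}
\ezV(M) \leq \ezV(m^0 + m^1) + \sum_{i=2}^N \ezz(m^i) < \ezV(m^0) + \sum_{i=1}^N \ezz(m^i) = \ezV(M),
\end{align*}
the desired contradiction.

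The crux is the strict binding inequality above: the attractive potential gain $\sim C(t) m^1/t$ must dominate the Coulomb repulsion $\sim m^0 m^1/t$. This is exactly what \eqref{Long} guarantees by forcing $C(t) \to \infty$; without it both contributions scale identically as $1/t$ and the argument would collapse, which is consistent with the known non-existence phenomena for faster-decaying potentials. A minor technical point is the compact-support reduction for the components $u^i$, which can be handled by standard properties of liquid drop minimizers (or, if needed, by truncating to an indecomposable component of largest mass and absorbing the small error into the strict binding).
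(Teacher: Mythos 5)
Your argument is correct and follows essentially the same route as the paper: reduce to showing that the long-range hypothesis \eqref{Long} produces a strict binding inequality, and use it to contradict the splitting identity $\ezV(M) = \ezV(m^0) + \sum_{i\ge1}\ezz(m^i)$. The paper delegates the strict binding inequality to Step~6 of the proof of Theorem~1 in \cite{AlamaBronsardChoksiTopalogluLongRange}, whereas you reconstruct it directly; the paper also sidesteps the question of whether liquid drop minimizers have compact support by working with compactly supported near-minimizers of $\ezV(m^0)$ and $\ezz(m^1)$ rather than exact minimizers $u^0,u^1$, which is the cleaner way to handle the technical point you flag at the end.
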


The most important examples for TFDW are those with atomic or molecular potentials $V$, as they are related to the Ionization Conjecture~\cite{Lions,LeBris,FrankNamVBosch,LuOttoLiquidDropBackground,NamVBosch}.  We consider the atomic case,
\beq\label{atomic}
V(x)=V_Z(x)= {Z\over |x|},
\eeq
with $Z\ge 0$ representing a constant nuclear charge.  
With slight abuse of notation, we denote by $\Eeps^Z$, $\EzZ$ the energies \eqref{EpsFnctnl} and \eqref{LDen}, respectively, with the atomic choice $V=V_Z=Z/|x|$, and
\begin{gather}   e_\eps^Z(M):= \inf \left\{  \Eeps^Z(u) \, : \, u\in \HM_+\right\}, \quad
    \ezZ(M) := \inf \left\{  \Ez(u) \, : \,  u\in \Xp\right\}.
\end{gather}
For this choice of potential, and in the liquid drop setting, Lu and Otto~\cite{LuOttoLiquidDropBackground} proved that there exists $\mu_0>0$ for which the ball $\BRM=B_{r_M}(0)$, $r_M=\sqrt[3]{{3M\over 4\pi}}$, centered at the origin of volume $M$ is the unique (up to translations for $Z=0$), strict minimizer of $\ezZ(M)$ for all $0<M<Z+\mu_0$.  The corresponding existence result for TFDW is much weaker:  by a result of LeBris~\cite{LeBris}, for each $\eps>0$ fixed, there exists $\mu_\eps>0$ for which $e_\eps^Z(M)$ is attained for all $0<M<Z+\mu_\eps$.  A natural conjecture is that the intervals of existence converge, that is $\mu_\eps\etends \mu_0$.  Using Theorem~\ref{Th1} we are able to prove the following:

\begin{Th}\label{almostminlim}
Let $V(x)=Z/|x|$, $Z>0$.
\begin{enumerate}
\item[(a)]  For any $M\in (0, Z+\mu_\eps)$, $e_\eps^Z(M)$ is attained at $u_\eps\in\HM_+$ for each $\eps>0$, and $u_\eps\etends \mathbbm{1}_{\BRM}$ in $L^2$ norm.
\item[(b)]   For every $M\in (Z, Z+\mu_0)$ and sequence $\eps_n\ntends 0^+$, there exists a subsequence (not relabeled), and $M_n\le M$ with $M_n\ntends M$, such that $\eepsnZ(M_n)$ attains a minimizer $u_n\in\mathcal{H}_{+}^{M_\eps}$.  Moreover, $u_n\ntends \mathbbm{1}_{\BRM}$ in $L^2$ norm.
\end{enumerate}
\end{Th}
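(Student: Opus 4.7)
The plan is to combine LeBris's existence theorem for TFDW with Lu and Otto's strict minimality of $\mathbbm{1}_{\BRM}$ for the liquid drop functional, using the $\Gamma$-convergence machinery of Theorem \ref{Th1} and its minimizer-form corollary Theorem \ref{minlim} (more precisely Lemma \ref{minlem}) as the bridge.  One first observes that $V_Z(x)=Z/|x|$ satisfies \eqref{HypV}: split at $|x|=1$ to put the singular part in $L^{5/2}$ and the tail in $L^\infty$, with $V_Z(x)\to 0$ as $|x|\to\infty$; thus all previous results apply to $V=V_Z$.

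For part (a), LeBris produces a minimizer $u_\eps\in\HM_+$ whenever $M<Z+\mu_\eps$.  Along any sequence $\eps_n\to 0^+$, Theorem \ref{minlim} yields, up to subsequence, a generalized minimizer $\{u^0,\ldots,u^N\}$ of $\ezZ(M)$ for which \eqref{Cmpctnss} and \eqref{xfar} hold.  Lu and Otto show that for $M<Z+\mu_0$ the ball $\mathbbm{1}_{\BRM}$ is the unique strict minimizer of $\ezZ(M)$, meaning every nontrivial split $M=m^0+\sum_{i\ge 1}m^i$ with $\sum_{i\ge 1}m^i>0$ satisfies $\ezZ(m^0)+\sum_{i\ge 1}\ezz(m^i)>\ezZ(M)$.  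This forces $N=0$ and $u^0=\mathbbm{1}_{\BRM}$, so \eqref{Cmpctnss} delivers $u_{\eps_n}\to \mathbbm{1}_{\BRM}$ in $L^2$; uniqueness of the limit then promotes this to convergence of the full family $u_\eps$.

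For part (b) the obstacle is that $M>Z$ may exceed $Z+\mu_\eps$, so LeBris does not directly produce a minimizer at mass $M$.  The natural construction is $M_n:=\min\bigl(M,\,Z+\mu_{\eps_n}-\tfrac1n\bigr)$, which by LeBris is attained at some $u_n\in\mathcal{H}_+^{M_n}$; the crux is to show that, along a subsequence, $M_n\to M$, equivalently $\liminf_n \mu_{\eps_n}\ge M-Z$.  I would argue by contradiction: if along a subsequence $\mu_{\eps_n}\to\mu^\ast<M-Z<\mu_0$, fix $M^\ast\in(Z+\mu^\ast,\,Z+\mu_0)\cap(0,M]$; then for large $n$ the TFDW minimum $\eepsnZ(M^\ast)$ is not attained, and the standard HVZ-type binding criterion produces $m_n\in(0,M^\ast]$ with $\eepsnZ(M^\ast)=\eepsnZ(M^\ast-m_n)+e_{\eps_n}^0(m_n)$.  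Combining the pointwise convergence $e_\eps^V\to\ezV$ (which follows from Theorem \ref{Th1}) with continuity of $\ezV$ in the mass, passing to a further subsequence $m_n\to m^\ast\in[0,M^\ast]$ gives $\ezZ(M^\ast)\ge \ezZ(M^\ast-m^\ast)+\ezz(m^\ast)$, contradicting Lu-Otto strict minimality at $M^\ast<Z+\mu_0$.  Once $M_n\to M$ is established, Lemma \ref{minlem} applied to the sequence $u_n$ produces a generalized minimizer of $\ezZ(M)$ which, again by Lu-Otto, must be $\mathbbm{1}_{\BRM}$, yielding $u_n\to\mathbbm{1}_{\BRM}$ in $L^2$.

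The main obstacle is the contradiction step in (b): ruling out the degenerate limits $m^\ast\in\{0,M^\ast\}$ of the HVZ mass split.  The case $m^\ast=M^\ast$ is excluded by the strict inequality $\ezZ(M^\ast)<\ezz(M^\ast)$, reflecting the strict attractivity of $V_Z$ and embedded in the Lu-Otto analysis.  The case $m^\ast=0$ is more delicate and requires a quantitative binding estimate forcing the escaping mass in the non-attaining TFDW sequence to be bounded below uniformly in $n$; carrying this out is the technical heart of the argument, coupling the fixed-$\eps$ TFDW concentration-compactness to the $\eps\to 0$ liquid-drop limit.
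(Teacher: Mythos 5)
Your argument for part (a) matches the paper's: LeBris gives attainment, Lemma~\ref{minlem} produces a generalized minimizer of $\ezZ(M)$, and the Lu--Otto uniqueness (packaged in Remark~\ref{uniquegm}) collapses it to the singleton $\{\mathbbm{1}_{\BRM}\}$. That part is fine.

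Part (b) has a genuine gap, and you have in fact identified it yourself in your last paragraph. You choose $M_n := \min(M,\, Z + \mu_{\eps_n} - \tfrac1n)$ a priori, and then the entire argument hinges on proving $\liminf_n \mu_{\eps_n} \ge M - Z$. In your contradiction step, to pass the HVZ-type split $\eepsnZ(M^\ast) = \eepsnZ(M^\ast - m_n) + e_{\eps_n}^0(m_n)$ to the limit and contradict strict subadditivity, you need to rule out $m_n \to 0$; as you say, this would require a lower bound on the escaping mass that is uniform in $\eps$. But this is precisely the ingredient the paper flags as unavailable: immediately after stating Theorem~\ref{almostminlim}, the paper remarks that \emph{if} one could give a uniform (in $\eps>0$) lower bound on the diverging mass, one would be able to conclude $M_\eps = M$, which it explicitly labels a conjecture ($\mu_\eps \to \mu_0$). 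Your plan therefore reduces to proving the very statement the authors say they cannot prove, so the ``technical heart'' you defer is not a technicality but the essential obstruction. (There is also a secondary slip: the paper's $\mu_\eps$ is merely \emph{some} radius below which attainment is guaranteed, so $M^\ast > Z + \mu_{\eps_n}$ does not entail that $\eepsnZ(M^\ast)$ is \emph{not} attained; but even granting an optimal $\mu_\eps$, the $m^\ast = 0$ case is fatal.)

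The paper's route avoids this entirely by not choosing $M_n$ in advance. It considers only sequences $\eps_n$ for which $e^Z_{\eps_n}(M)$ is not attained (otherwise (a)-type reasoning gives $M_n = M$ directly). For such $\eps_n$, Lemma~\ref{conc}(ii) provides a mass $m^0_n \in (0,M)$ at which $e^Z_{\eps_n}(m^0_n)$ \emph{is} attained, say at $u_n$, together with the binding equality $e^Z_{\eps_n}(M) = e^Z_{\eps_n}(m^0_n) + e^0_{\eps_n}(M - m^0_n)$. The paper then glues: it cuts off $u_n$ to a compactly supported $\tilde u_n$ with negligible energy and mass loss, takes a near-optimal compactly supported $v_n$ for $e^0_{\eps_n}(M - m^0_n)$, translates $v_n$ far away, and sets $U_n = \tilde u_n + v_n(\cdot + \xi_n)$. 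This $U_n$ has mass tending to $M$ and $\EepsnZ(U_n) - \eepsnZ(M) \to 0$. Applying Theorem~\ref{Th1}(i) to $U_n$ and invoking Remark~\ref{uniquegm} forces $U_n \to \mathbbm{1}_{\BRM}$ in $L^2$; since $v_n(\cdot + \xi_n)$ has support escaping to infinity, its $L^2$ norm must vanish, which \emph{yields} $m^0_n \to M$ as a conclusion rather than a hypothesis. Taking $M_n = m^0_n$ completes the proof. The moral: the $\Gamma$-convergence compactness applied to a glued approximate minimizer at mass $\approx M$ does the work of the uniform binding bound you were missing.
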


Theorem \ref{almostminlim} is connected to the classical Kohn-Sternberg~\cite{KS} result on the existence of local minimizers of the $\eps$-problem in an $L^2$-neighborhood of an isolated local minimizer of the $\Gamma$-limit. We find minimizers for $\EepsZ$ which converge to the ball of mass $M$ as $\eps\to0^+$ in $\Ltwo$, which would have the given mass $M$ except for the possibility of vanishingly small pieces splitting off and diverging to infinity as $\eps\to0^+$. If it were possible to give a uniform (in $\eps> 0$) lower bound on the quantity of diverging mass in the case of splitting, then we would be able to eliminate this possibility completely and assert that $M_\eps = M$ in (b), as conjectured above.

\section{Compactness and Lower Bound}

In this section we prove part (i) of Theorem~\ref{Th1}.  This involves combining lower bounds on singularly perturbed problems of Cahn-Hilliard type with concentration-compactness methods, to deal with possible loss of compactness via splitting.

In this section we fix a potential $V$ satisfying \eqref{HypV}.  
 Throughout the paper, we shall denote by $C$ a generic constant whose value may change from one line to another.
We begin with some preliminary estimates.

\begin{Lem}\label{TwrdsBV}  Let $\{v_\eps\}_{\eps>0}\subset H^1(\RRR)$, with $\|v_\eps\|_{\Ltwo}^2\le M$ and $\EepsV (v_\eps)\le K_0$, where $K_0>0$ is a constant independent of $\eps$.  Then there exists a constant $C_0=C_0(K_0,M,V)$ such that $\forall 0<\eps<\frac{1}{4}$, we have
\begin{align}\label{Fepsbdd}\int_{\RRR}\left[\frac{\eps}{2} | \nabla v_\eps | ^2+\frac{1}{2\eps}W(v_\eps)\right]dx + D(\vert v_\eps\vert^2,\vert v_\eps\vert^2)
\leq C_0.\end{align}
\end{Lem}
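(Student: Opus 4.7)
The plan is to exploit the nonnegativity of every term on the left-hand side of \eqref{Fepsbdd}, so that the upper bound $\EepsV(v_\eps) \le K_0$ rearranges to
\[
A_\eps := \int_{\RRR}\left[\frac{\eps}{2}|\nabla v_\eps|^2 + \frac{1}{2\eps}W(v_\eps)\right]dx + D(|v_\eps|^2,|v_\eps|^2) \le K_0 + \int_{\RRR} V\,|v_\eps|^2 \, dx,
\]
and then bound the potential term $\int V|v_\eps|^2$ sublinearly in $A_\eps$, so that a Young-type absorption closes the estimate.

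Using hypothesis \eqref{HypV}, I would split $V = V_1 + V_2$ with $V_1 \in \Lfivehalves$ and $V_2 \in \Linfty$. The $V_2$ contribution is trivially controlled by $\|V_2\|_\infty M$ via the mass bound, and H\"older with exponents $\tfrac{5}{2}$ and $\tfrac{5}{3}$ gives
\[
\int_{\RRR} |V_1|\,|v_\eps|^2 \, dx \le \|V_1\|_{\frac{5}{2}}\; \|v_\eps\|_{\frac{10}{3}}^{2},
\]
which reduces the problem to controlling $\|v_\eps\|_{10/3}$. The sharp-interface scaling is what makes this work: the triple well factors as $W(t) = (t^{5/3}-t)^2$, and for $t\ge 2$ one has $t^{2/3}-1 \ge (1-2^{-2/3})\,t^{2/3}$, hence $W(t) \ge c_0 \, t^{10/3}$ for an explicit $c_0>0$. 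Splitting the integral at $|v_\eps|=2$, using $|v_\eps|^{10/3}\le 2^{4/3}|v_\eps|^2$ on the low set together with $\|v_\eps\|_2^2 \le M$, one obtains
\[
\|v_\eps\|_{\frac{10}{3}}^{\frac{10}{3}} \le C\,M + C \int_{\RRR} W(v_\eps)\,dx \le C\,M + 2C\eps\, A_\eps.
\]

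Taking the $3/5$ power gives $\|v_\eps\|_{10/3}^{2} \le C'\bigl(1 + (\eps A_\eps)^{3/5}\bigr)$, and since $3/5 < 1$, Young's inequality yields $(\eps A_\eps)^{3/5} \le \delta A_\eps + C(\delta)$ for any $\delta>0$ (using $\eps^{3/5}\le 1$ for $\eps<\tfrac{1}{4}$). Substituting back, one arrives at an inequality of the form $A_\eps \le K_1 + C''\delta\, A_\eps$ where $K_1, C''$ depend only on $K_0,M,V$; choosing $\delta$ small enough so that $C''\delta \le \tfrac{1}{2}$ absorbs $A_\eps$ into the left-hand side and produces $A_\eps \le C_0(K_0,M,V)$ as required.

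The main obstacle is more conceptual than technical: it is the observation that the triple well $W$ has precisely the right $t^{10/3}$ growth at infinity to match the critical Sobolev-type exponent dual to $V\in\Lfivehalves$; this compatibility between the hypothesis \eqref{HypV} and the sharp-interface choice of $c_{TF}, c_D$ is exactly what enables the absorption, and the remainder is a standard interpolation-and-absorption argument whose only delicate point is tracking constants carefully enough to ensure uniformity as $\eps\to 0^+$.
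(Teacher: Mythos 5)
Your proof is correct and follows essentially the same strategy as the paper: decompose $V$ into its $L^{5/2}$ and $L^\infty$ parts, exploit the $|t|^{10/3}$ growth of $W$ away from the wells to control the $L^{10/3}$ norm of $v_\eps$ in terms of $\int W(v_\eps)$ and the mass $M$, then absorb into the energy using $\eps<\tfrac14$. The only cosmetic difference is that the paper applies Young's inequality \emph{pointwise} to $V_{5/2}|v_\eps|^2$, producing a bound \emph{linear} in $\int|v_\eps|^{10/3}$ (hence in $\int W(v_\eps)$) that is absorbed directly into a fraction of $\tfrac{1}{2\eps}\int W$, whereas your integral H\"older gives $\|v_\eps\|_{10/3}^{2}=\bigl(\int|v_\eps|^{10/3}\bigr)^{3/5}$ and therefore needs an extra Young step on the fractional power of $A_\eps$; both routes are valid and give the same uniformity in $\eps$.
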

\begin{proof} First by \eqref{HypV}, we write $V=V_{5/2}+V_{\infty}$, where $V_{5/2}\in L^{\frac{5}{2}}(\RRR)$ and $V_{\infty}\in\Linfty$, and fix $K>0$ large enough so that
\begin{align} |  t | ^{\frac{10}{3}}\leq \frac{5}{3}W(t),\quad  |  t | > K .\end{align}
Then, by Young's inequality, for any $u\in H^1(\RRR)$, 
\begin{align}\int_{\RRR}V\vert u\vert^2dx&\leq\int_{\RRR}V_{5/2}\vert u\vert^2dx+\| V_\infty\|_{\Linfty}\int_{\RRR}\vert u\vert^2dx\\
&\leq \frac{2}{5}\int_{\RRR} |  V_{5/2} | ^{\frac{5}{2}}dx+\frac{3}{5}\int_{\RRR} |  u | ^{\frac{10}{3}}dx+\| V_\infty\|_{\Linfty}\int_{\RRR}\vert u\vert^2dx\\
&\leq C\left(1+\int_{\{|  u | <K\}} | u|^{2}dx\right)+\int_{\{|  u | >K\}}W(u)dx+\| V_\infty\|_{\Linfty}\int_{\RRR}\vert u\vert^2dx\\
&\leq C_2+C_1\int_{\RRR}\vert u\vert^2dx+\frac{1}{2\eps}\int_{\RRR}W(u)dx.\end{align}
Hence, there exist constants $C_1,C_2>0$ for which
\begin{equation}\label{eqnCrcvty}2\Eeps^V(u)+C_1\int_{\RRR}\vert u\vert^2dx+C_2  \geq\int_{\RRR}\left[\frac{\eps}{2} | \nabla u | ^2+ \frac{1}{2\eps}W(u)\right]dx+D(\vert u\vert^2,\vert u\vert^2),
\end{equation}
and the desired estimate follows.
\end{proof}


\begin{Rmk}\label{RmkTwrdsBV} Under the hypotheses of Lemma~\ref{TwrdsBV},  
$\{v_\eps\}_{\eps>0}$ is bounded in $\Ltenthirds$ and 
\begin{align}\label{limWis0} \int_{\RRR}W(v_\eps)dx\etends 0.\end{align}
\end{Rmk}

\begin{Lem}\label{Vlimit}
	Assume $V$ satisfies \eqref{HypV}, and $\{u_n\}_{n\in\NN}, \{v_n\}_{n\in\NN}$ are sequences which are bounded in $\Ltwo \cap L^{\frac{10}{3}}(\RRR)$ and such that  $(u_n - v_n)\ntends 0$ in $\Ltwoloc $.  Then,
$$  \int_{\RRR}V\left( |u_n|^2 - |v_n|^2\right) dx  \ntends 0.  $$  
\end{Lem}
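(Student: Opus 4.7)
My plan is to reduce matters to two easily handled pieces by a standard tail-and-ball decomposition: a far-field contribution where $V$ is uniformly small, and a near-field contribution on a large ball where the $\Ltwoloc$ convergence of $u_n - v_n$ can be activated once $V$ has been approximated by a bounded, compactly supported function. I would start from the pointwise inequality $\bigl||u_n|^2 - |v_n|^2\bigr| \le |u_n - v_n|\bigl(|u_n| + |v_n|\bigr)$, noting that the sequences $\{|u_n - v_n|\}$ and $\{|u_n| + |v_n|\}$ are bounded in $\Ltwo\cap\Ltenthirds$ and that $|u_n - v_n|\ntends 0$ in $\Ltwoloc$.

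Given $\delta > 0$, the decay condition in \eqref{HypV} lets me pick $R$ with $\|V\One_{B_R^c}\|_{\Linfty}<\delta$; I then split $\int_{\RRR} V(|u_n|^2 - |v_n|^2)\,dx$ into integrals over $B_R^c$ and $B_R$. On $B_R^c$, H\"older (applying $L^\infty$ against $L^1$, with Cauchy--Schwarz on $|u_n - v_n|(|u_n|+|v_n|)$) gives a bound of $C\delta$ uniformly in $n$. On $B_R$, the decomposition $V = V_{5/2} + V_\infty$ from \eqref{HypV} combined with the finite measure of $B_R$ forces $V\One_{B_R}\in\Lfivehalves$; I would then approximate $V\One_{B_R}$ to within $\delta$ in $\Lfivehalves$ by some $\tilde V\in\Linfty$ of compact support. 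H\"older with exponents $5/2$ and $5/3$ controls the approximation error by $C\delta$, using the uniform $\Lfivethirds$ bound on $|u_n - v_n|(|u_n| + |v_n|)$ inherited from the $\Ltenthirds$ boundedness hypothesis. The remaining bounded, compactly supported piece is then controlled by Cauchy--Schwarz with $|u_n - v_n|$ restricted to $\supp\tilde V$ and the $\Ltwo$ bound on $|u_n| + |v_n|$, so that $\int \tilde V\bigl(|u_n|^2 - |v_n|^2\bigr)\,dx \ntends 0$ by the $\Ltwoloc$ convergence.

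Combining these estimates yields $\limsup_n \bigl|\int V(|u_n|^2 - |v_n|^2)\,dx\bigr| \le C\delta$, and since $\delta$ was arbitrary the claim follows. The main obstacle is the $L^{5/2}$ component of $V$: it has no pointwise bound and cannot be handled by $\Ltwoloc$ convergence of $u_n - v_n$ alone. The key ingredient bridging this gap is the $\Ltenthirds$ boundedness hypothesis, which upgrades $|u_n - v_n|(|u_n|+|v_n|)$ to a uniform bound in $\Lfivethirds$ --- precisely the exponent dual to $\Lfivehalves$ --- so that an $L^{5/2}$-approximation of $V$ by a bounded, compactly supported function contributes only an $O(\delta)$ error.
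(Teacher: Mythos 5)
Your proposal is correct and takes essentially the same approach as the paper: a three-piece decomposition of $V$ into a far-field tail controlled by its small $L^\infty$ norm, a near-field $L^{5/2}$-small remainder controlled by H\"older against the $L^{5/3}$ bound on $|u_n|^2+|v_n|^2$ (coming from the $L^{10/3}$ boundedness), and a bounded compactly supported piece handled by the $L^2_{loc}$ convergence. The only cosmetic difference is that the paper realizes the middle-plus-third pieces via an explicit height truncation $V_2=[V-t]_+\One_{B_R}$, $V_3=\min\{V,t\}\One_{B_R}$, whereas you phrase it as approximating $V\One_{B_R}$ in $L^{5/2}$ by a bounded compactly supported $\tilde V$; these are the same operation.
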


\begin{proof}  Let $\delta>0$ be given.
By \eqref{HypV} we may decompose $V=V_1+V_2+V_3$, where:
$$  V_1(x)=V(x)[1-\mathbbm{1}_{B_R}(x)], \quad
   V_2(x)=[V(x)-t]_+\mathbbm{1}_{B_R}(x), \quad
     V_3(x)=\min\{V(x) ,t\}\mathbbm{1}_{B_R}(x),
$$
with  $R$ large enough that $\|V_1\|_{\Linfty} < \delta$; $t$ large enough that $\|V_2\|_{L^{\frac52}(\RRR)}<\delta$.  Note that $V_3$ is compactly supported and uniformly bounded.  We then consider each part separately:
\begin{gather}
\int_{\RRR} V_1  \bigl| |u_n|^2-|v_n|^2\bigr| dx  \le  \delta (\|u_n\|_{\Ltwo}^2 + \|v_n\|_{\Ltwo}^2) \le c\delta; \\
\int_{\RRR} V_2 \bigl| |u_n|^2-|v_n|^2\bigr|  dx  
   \le  \|V_2\|_{L^{\frac52}(\RRR)} ( \|u_n\|_{\Ltenthirds} + \|v_n\|_{\Ltenthirds}) \le c\delta;  \\
\int_{\RRR}  V_3  \bigl| |u_n|^2-|v_n|^2\bigr| dx  \le \|V_3\|_{\Linfty} \int_{B_R} 
    \bigl| |u_n|^2-|v_n|^2\bigr| dx   \ntends 0.
\end{gather}
As $\delta>0$ is arbitrary, the result follows.
\end{proof}

\begin{Rmk}
The hypothesis \eqref{HypV} is slightly stronger than is typical for problems of TFDW type, in which a weaker local integrability is assumed, $V\in{\Lthreehalves}+\Linfty $.  (See e.g., \cite{BenguriaBrezisLieb,NamVBosch}.)  Having $V\in \Lthreehalvesloc$ is a natural condition for using the squared gradient to control $V|u|^2$ via the Sobolev embedding.  However, given the singularly perturbed nature of $\EepsV$, control on the Dirichlet energy is lost as $\eps\to 0^{+}$, and we must rely on the $L^{\frac{10}{3}}$ norm instead; hence the need for the more stringent ${\Lfivehalves}+\Linfty$ demanded in \eqref{HypV}. 
\end{Rmk}

\medskip

Next, we prepare the way for the proof of the compactness part of Theorem~\ref{Th1} by establishing that sequences $\{u_\eps\}_{\eps>0}$ with bounded energy must have centers of concentration, even if they are divergent.  The following Lemma will be used to rule out dissipation of $\{u_\eps\}_{\eps>0}$ as long as the BV norm is bounded and the $ L^{\frac43 }$ norm of $u_\eps$ is not vanishing:

\begin{Lem}\label{inqltyBV}There exists a universal constant $C>0$ such that for all $\psi\in \BV,$
\begin{equation}\label{NoVnshngCnd}
\| \psi\|_{\BV}\left[\sup_{a\in\RRR}\int_{B_1(a)} |  \psi |  dx\right]^\frac{1}{3}\geq C\int_{\RRR} |  \psi | ^{\frac43 } dx.
\end{equation}
\end{Lem}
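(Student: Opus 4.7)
The plan is to prove the inequality by a standard covering argument combining the BV--Sobolev embedding with Hölder interpolation, in the spirit of the $H^1$ version of Lions' concentration lemma.

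First I would recall that in three dimensions the Sobolev embedding for functions of bounded variation on a bounded Lipschitz domain $\Omega$ gives
$$\|\psi\|_{L^{3/2}(\Omega)}\le C\,\|\psi\|_{BV(\Omega)}=C\left(\|\psi\|_{L^1(\Omega)}+|D\psi|(\Omega)\right),$$
with $C$ depending only on $\Omega$ (so a single constant works for all unit balls $B_1(a)$, by translation invariance). I would combine this with the $L^p$-interpolation inequality, taking $\theta=\tfrac14$ between $L^1$ and $L^{3/2}$, to obtain
$$\int_{B_1(a)}|\psi|^{4/3}\,dx\le \|\psi\|_{L^1(B_1(a))}^{1/3}\,\|\psi\|_{L^{3/2}(B_1(a))}\le C\,\|\psi\|_{L^1(B_1(a))}^{1/3}\bigl(\|\psi\|_{L^1(B_1(a))}+|D\psi|(B_1(a))\bigr).$$

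Next I would choose a countable family of points $\{a_i\}\subset\mathbb{R}^3$ such that $\{B_1(a_i)\}$ covers $\mathbb{R}^3$ with bounded overlap, i.e.\ there is a fixed integer $N$ (depending only on dimension) with $\sum_i \mathbbm{1}_{B_1(a_i)}\le N$. Setting
$$S:=\sup_{a\in\mathbb{R}^3}\int_{B_1(a)}|\psi|\,dx,$$
the previous local estimate yields
$$\int_{B_1(a_i)}|\psi|^{4/3}\,dx\le C\,S^{1/3}\bigl(\|\psi\|_{L^1(B_1(a_i))}+|D\psi|(B_1(a_i))\bigr)$$
for every $i$. Summing over $i$ and using the finite-overlap property, the right-hand side is bounded by $CNS^{1/3}\|\psi\|_{BV(\mathbb{R}^3)}$, while the left-hand side dominates $\int_{\mathbb{R}^3}|\psi|^{4/3}\,dx$. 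This produces
$$\int_{\mathbb{R}^3}|\psi|^{4/3}\,dx\le C\,\|\psi\|_{BV(\mathbb{R}^3)}\,S^{1/3},$$
which is precisely \eqref{NoVnshngCnd} after rearrangement.

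I do not anticipate any genuine obstacle: the BV--Sobolev embedding on a Lipschitz domain and the existence of a finitely-overlapping unit-ball covering of $\mathbb{R}^3$ are both classical. The only step that requires a little care is arranging the local Sobolev constant to be uniform in $a$, which follows from translation invariance so that the constant for $B_1(a)$ is the same as that for $B_1(0)$. The interpolation exponent $\theta=1/4$ is forced by the requirement $\tfrac{3}{4}=\theta\cdot 1+(1-\theta)\cdot\tfrac{2}{3}$, and it is precisely this balance that produces the exponent $1/3$ on the supremum factor.
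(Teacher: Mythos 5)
Your proof is correct and follows essentially the same route as the paper's: localize to unit balls, apply Hölder (the interpolation $\int|\psi|^{4/3}\le \|\psi\|_{L^1}^{1/3}\|\psi\|_{L^{3/2}}$) and a Sobolev-type inequality, then patch the local estimates together. The only technical variation is in the patching and in which Sobolev inequality is invoked: the paper multiplies $\psi$ by a fixed bump $\chi(\cdot-a)\in C_0^\infty(B_1(0))$, uses the global $W^{1,1}_0\hookrightarrow L^{3/2}$ embedding for $\chi_a\psi$, and then integrates the resulting pointwise-in-$a$ inequality over $a\in\RRR$, whereas you use the local BV--Sobolev embedding $\|\psi\|_{L^{3/2}(B_1(a))}\le C\|\psi\|_{BV(B_1(a))}$ directly and sum over a discrete, finitely overlapping cover. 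These are interchangeable implementations of the same idea (the paper's cutoff-plus-global-Sobolev step is in fact the standard proof of the local embedding you cite); the paper's continuous averaging is slightly more self-contained, while your discrete covering is arguably the more familiar packaging. Both yield a universal constant for the same structural reason, and your $\theta=1/4$ interpolation bookkeeping is correct.
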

\begin{proof} It suffices to prove \eqref{NoVnshngCnd} holds for $\psi\in W^{1,1}(\RRR)$, as we can extend it to $\psi\in \BV$ by using a density argument~\cite[Theorem 3.9.]{AmbrosioFuscoPallaraBV}.

Let $\psi\in W^{1,1}(\RRR)$, and define $\chi_a:=\chi(x-a)$, where  $\chi\in C_0^\infty(\RRR)\setminus\{0\}$ is any nonnegative function that is compactly supported in $B_1(0)$.

Then, by H\"{o}lder's inequality and Sobolev's inequality,
\begin{align}\int_{B_1(a)} |  \chi_a\psi | ^{\frac43 } dx&=\int_{B_1(a)}|  \chi_a\psi |^{\frac13} |  \chi_a\psi |  dx\\
& \leq   \left[{\int_{B_1(a)} | \chi_a\psi |  dx}\right]^\frac{1}{3}\left(\int_{\RRR} |  \chi_a\psi | ^{\frac{3}{2}}dx\right)^{\frac23}\\
&\leq C\left[{\sup_{a\in\RRR}\int_{B_1(a)} |  \psi |  dx}\right]^\frac{1}{3}\int_{\RRR} | \nabla(\chi_a\psi) |  dx\\
&\le C\left[{\sup_{a\in\RRR}\int_{B_1(a)} |  \psi |  dx}\right]^\frac{1}{3}\int_{\RRR}\left(\chi_a | \nabla \psi | + |  \nabla\chi_a |   |  \psi |  \right)dx.\end{align}
We conclude the proof of this Lemma by integrating with respect to $a\in\RRR$.\end{proof}

From this Lemma we may then conclude that noncompactness of sequences with bounded $\BV$ norm is due to splitting and translation.  The following is an adaptation of \cite[Proposition 2.1]{FrankLieb}, which is proven for characteristic functions of finite perimeter sets.

\begin{Prop}\label{FL2.1}
Assume $\{\psi_n\}_{n\in\NN}$ is a bounded sequence in $\BV$, for which $\liminf_{n\to\infty}\|\psi_n\|_{\Lfourthirds}>0$.  Then, there exists translations $\{a_n\}_{n\in\NN}\subset\RRR$, and $\psi^0\in \BV$,  $\psi^0\not\equiv 0$, such that for some subsequence (not relabeled) we have:
\begin{enumerate}
\item[(a)] $\psi_n(\cdot - a_n) \ntends \psi^0$ in $\Loneloc $,
\item[(b)]  $\| \psi^0 \|_{\BV} \le \liminf_{n\to\infty} \| \psi_n \|_{\BV}$.
\end{enumerate}
\end{Prop}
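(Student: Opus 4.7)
The plan is to combine the pointwise lower bound from Lemma~\ref{inqltyBV} with the compact embedding of bounded sets of $\BV$ into $\Loneloc$ (a Rellich-type compactness), and finish by lower semicontinuity of the total variation.

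First I would use Lemma~\ref{inqltyBV} to locate centers of concentration. Since $\sup_n \|\psi_n\|_{\BV}<\infty$ and $\liminf_n \int_{\RRR}|\psi_n|^{4/3}dx>0$, rearranging \eqref{NoVnshngCnd} gives a constant $\delta>0$ and (for all sufficiently large $n$) points $b_n\in\RRR$ with
\[
\int_{B_1(b_n)}|\psi_n|\,dx \ge \delta.
\]
I then set $a_n:=-b_n$ and define the shifted sequence $\tilde{\psi}_n(x):=\psi_n(x-a_n)$, so that
\[
\int_{B_1(0)}|\tilde\psi_n|\,dx=\int_{B_1(b_n)}|\psi_n|\,dx\ge \delta.
\]

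Next, because the $\BV$ seminorm is translation-invariant, $\{\tilde\psi_n\}$ is bounded in $\BV$. The classical compact embedding $\BV(\RRR)\hookrightarrow\hookrightarrow\Loneloc $ (see, e.g., Ambrosio-Fusco-Pallara, Theorem 3.23) allows me to extract a subsequence (not relabeled) and a function $\psi^0\in\Loneloc$ such that $\tilde\psi_n\to\psi^0$ in $\Loneloc$, which is exactly conclusion (a). Passing to the limit in the lower bound on $B_1(0)$, I obtain
\[
\int_{B_1(0)}|\psi^0|\,dx\ge \delta>0,
\]
so in particular $\psi^0\not\equiv 0$.

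Finally, conclusion (b) is obtained from the lower semicontinuity of the total variation with respect to $\Loneloc$ convergence, combined with translation invariance:
\[
\|\psi^0\|_{\BV}\le\liminf_{n\to\infty}\|\tilde\psi_n\|_{\BV}=\liminf_{n\to\infty}\|\psi_n\|_{\BV}.
\]
This also automatically upgrades $\psi^0$ from $\Loneloc$ to $\BV$. The main point to be careful about is that one cannot expect $\tilde\psi_n\to\psi^0$ globally in $L^1$ (mass may still split into further pieces escaping to infinity), which is why the statement is restricted to $\Loneloc$; this is exactly the obstruction that forces the concentration-compactness-style formulation of Theorem~\ref{Th1}, rather than a standard $\Gamma$-convergence.
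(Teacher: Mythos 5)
Your argument is correct and follows the same line as the paper's: use Lemma~\ref{inqltyBV} to locate centers of concentration $b_n$ where the sequence carries mass bounded away from zero, translate, apply the compact embedding of bounded sets in $\BV$ into $\Loneloc$ to extract a subsequence and a limit $\psi^0$, deduce $\psi^0\not\equiv 0$ from the lower bound on $B_1(0)$, and obtain (b) by lower semicontinuity of the total variation. The only point worth tightening is the final sentence: lower semicontinuity of the total variation gives a finite $BV$ seminorm, but membership of $\psi^0$ in $L^1(\RRR)$ (which is also part of $\psi^0\in\BV$) should be noted separately, e.g.\ by Fatou's lemma applied to $\int_{\RRR}|\tilde\psi_n|\,dx\le\sup_n\|\psi_n\|_{L^1(\RRR)}<\infty$; this is a one-line addition and does not change the structure of the proof.
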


\begin{proof}
By Lemma~\ref{inqltyBV}, we have
$$  \sup_{a\in\RRR}\int_{B_1(a)}\vert \psi_n\vert dx 
 \ge \left[C{\int_{\RRR}\vert \psi\vert^{\frac{4}{3}} dx \over \|\psi_n\|_\BV}\right]^3 \ge 2c,  $$
for some $c>0$ independent of $n$.  Hence, for each $n\in\NN$ we may choose $a_n\in\RRR$ for which
\beq\label{not0}
\int_{B_1(a_n)}\vert \psi_n\vert dx\ge c>0. 
\eeq
 As $\{\psi_n(\cdot-a_n)\}_{n\in\NN}$ is bounded in $\BV$, there exists a subsequence and $\psi^0\in\BV$ for which (a) and (b) hold.  By \eqref{not0} and $L^1_{loc}$ convergence, the limit $\psi^0\not\equiv 0$.
\end{proof}
Once we have localized a piece of our $\BV$-bounded sequence $\{\psi_n\}_{n\in\NN}$ as an $L^1_{loc}$-converging part, we will need to separate the compact piece from the rest, which converges locally to zero but may carry nontrivial $L^1$-mass to infinity.  To do this, we first define a smooth cut-off function $\omega:\mathbb{R}\to[0,1]$, with
\begin{align}\label{DfOmg}\omega\equiv1\text{ for } \ x<0,\quad  \omega\equiv0\text{ for } \ x>1,\text{ and }  \|\omega'\|_{\Linfty}\leq2,
\end{align}
and for any $\rho>0$, 
\beq\label{OmgEps}
\omega_\rho(x)= \omega( |x|-\rho ).
\eeq

The next Proposition is based on \cite[Lemma 2.2.]{FrankLieb}:

\begin{Prop}\label{CmpctnssBck}Let $\{\psi_n\}_{n\in\NN}$ be bounded in $\BV$ with $\psi_n\ntends\psi^0$ in $\Loneloc$ and pointwise almost everywhere in $\RRR$, for some function $\psi^0\in\BV$. If $0<\|\psi^0\|_{\Lone}<\liminf_{n\to\infty}\| \psi_n\|_{\Lone}$, then there exist radii $\{\rho_n\}_{n\in\NN}\subset(0,\infty)$ such that, up to a subsequence,
\begin{align}\label{SpltTtlVr} \int_{\RRR}[ | \nabla\psi_n | - | \nabla(\psi_n\omega_{\rho_n}) | - | \nabla(\psi_n-\psi_n\omega_{\rho_n}) | ]\ntends 0.\end{align}
Moreover,
\begin{align}\label{Prts}\psi_n\omega_{\rho_n}\ntends\psi^0 \ \textit{ in $\Lone$ and }
  \psi_n(1-\omega_{\rho_n})\ntends 0 \ \textit{ in $\Loneloc$,}\end{align}
with each converging pointwise almost everywhere in $\RRR$.
\end{Prop}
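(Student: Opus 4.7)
The plan is to choose the radii $\rho_n$ so that the cut-off $\omega_{\rho_n}$ transitions across a unit annulus on which $|\psi_n|$ carries vanishing mass, while simultaneously $\rho_n\to\infty$ fast enough that the compact piece $\psi_n\omega_{\rho_n}$ still captures all of $\psi^0$. First I would invoke the Leibniz product rule for $\psi_n\in\BV$ against the smooth cutoff $\omega_{\rho_n}$, which yields, as vector Radon measures,
$$ \nabla(\psi_n\omega_{\rho_n}) = \omega_{\rho_n}\,\nabla\psi_n + \psi_n\,\nabla\omega_{\rho_n}, \qquad \nabla(\psi_n(1-\omega_{\rho_n})) = (1-\omega_{\rho_n})\,\nabla\psi_n - \psi_n\,\nabla\omega_{\rho_n}. $$
Since $0\le\omega_{\rho_n}\le 1$, the triangle inequality for the total variation delivers
$$ 0 \le \int_{\RRR}|\nabla(\psi_n\omega_{\rho_n})| + \int_{\RRR}|\nabla(\psi_n(1-\omega_{\rho_n}))| - \int_{\RRR}|\nabla\psi_n| \le 2\int_{\RRR}|\psi_n|\,|\nabla\omega_{\rho_n}|\,dx, $$
and using $|\nabla\omega_{\rho_n}|\le 2\,\One_{\{\rho_n\le|x|\le\rho_n+1\}}$, the splitting assertion \eqref{SpltTtlVr} reduces to producing $\rho_n$ with $f_n(\rho_n)\to 0$, where $f_n(\rho):=\int_{\rho\le|x|\le\rho+1}|\psi_n|\,dx$.

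Next I would select $\rho_n$ via a Fubini/averaging argument. For any $0<A<B$, Fubini gives $\int_A^B f_n(\rho)\,d\rho \le \|\psi_n\|_{\Lone} \le C$ uniformly in $n$, since $\{\psi_n\}$ is bounded in $\BV$. The $\Loneloc$-convergence $\psi_n\to\psi^0$, combined with a diagonal extraction, produces an increasing sequence $R_n\to\infty$ such that $\|\psi_n-\psi^0\|_{L^1(B_{R_n})}\ntends 0$. Averaging on $[R_n/2,R_n-1]$ then produces $\rho_n$ in that interval with $f_n(\rho_n)\le 2C/(R_n/2-1)\to 0$; by construction $\rho_n\to\infty$ and $\rho_n+1\le R_n$.

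With this choice, \eqref{SpltTtlVr} is immediate from the displayed bound above. For the first part of \eqref{Prts},
$$ \|\psi_n\omega_{\rho_n}-\psi^0\|_{\Lone} \le \|\psi_n-\psi^0\|_{L^1(B_{\rho_n+1})} + \|\psi^0\|_{L^1(\RRR\setminus B_{\rho_n})}, $$
both terms vanishing in the limit: the first since $\rho_n+1\le R_n$, the second since $\psi^0\in\Lone$ and $\rho_n\to\infty$. For the second part, $1-\omega_{\rho_n}\equiv 0$ on any fixed $B_R$ once $\rho_n>R$, so $\psi_n(1-\omega_{\rho_n})\to 0$ in $\Loneloc$. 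Pointwise a.e.\ convergence in both cases follows from the a.e.\ convergence $\psi_n\to\psi^0$ and the fact that $\omega_{\rho_n}(x)\to 1$ for every $x\in\RRR$.

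The delicate point is the coupling between the two demands on $\rho_n$: vanishing annular mass (requiring $\rho_n$ in a sufficiently wide window for the averaging estimate to be sharp) and $\Lone$-recovery of $\psi^0$ (requiring $\rho_n+1$ to stay within the ball on which $L^1_{loc}$-closeness has already kicked in at index $n$). The diagonal choice of $R_n$ reconciles these demands, producing a growing window $[R_n/2,R_n-1]$ inside which both conditions hold simultaneously; without it, one could not guarantee $\|\psi_n-\psi^0\|_{L^1(B_{\rho_n+1})}\to 0$, and the truncation might capture mass that neither converges to $\psi^0$ locally nor is discarded to the complement.
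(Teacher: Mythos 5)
Your proof is correct and takes a cleaner route than the paper's. Both start from the same subadditivity-plus-product-rule estimate reducing \eqref{SpltTtlVr} to producing radii with $\int_{B_{\rho_n+1}\setminus B_{\rho_n}}|\psi_n|\,dx\ntends 0$, but the selection of $\rho_n$ and the derivation of \eqref{Prts} differ substantially. The paper splits into two cases according to whether $\supp\psi^0$ is bounded: if bounded it takes $\rho_n\equiv R$ constant; if unbounded it uses the hypothesis $\|\psi^0\|_{\Lone}<\liminf\|\psi_n\|_{\Lone}$ to define $R_n$ by the mass-matching condition $\int_{B_{R_n}}|\psi_n|\,dx=\|\psi^0\|_{\Lone}$, shows $R_n\ntends\infty$, and then finds $\rho_n\in[\frac{R+R_n}{2},R_n]$ by a pigeonhole over $K\sim R_n-R$ unit shells. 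It then establishes $\|\psi_n\omega_{\rho_n}\|_{\Lone}\ntends\|\psi^0\|_{\Lone}$ by Fatou plus the constraint $\rho_n\le R_n$, and invokes Brezis--Lieb to upgrade to $\Lone$ convergence. You instead define $R_n\ntends\infty$ by a diagonal extraction from $\Loneloc$-convergence so that $\|\psi_n-\psi^0\|_{L^1(B_{R_n})}\ntends 0$, and then select $\rho_n$ inside $[R_n/2,R_n-1]$ by a Fubini averaging bound against the uniform $\Lone$ bound. This unifies the two cases, avoids the pigeonhole, and lets you prove $\|\psi_n\omega_{\rho_n}-\psi^0\|_{\Lone}\ntends 0$ by a direct triangle inequality rather than Brezis--Lieb. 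Notably, your argument never uses the hypothesis $\|\psi^0\|_{\Lone}<\liminf\|\psi_n\|_{\Lone}$, so it actually establishes the conclusion under weaker assumptions (that hypothesis is only needed in the paper's proof strategy; in the complementary case the conclusion is anyway trivial since $\psi_n\to\psi^0$ in $\Lone$). One cosmetic point: for small $n$ the window $[R_n/2,R_n-1]$ may be empty, so you should note that the choice is made only for $n$ large enough that $R_n\ge 2$, which suffices since all conclusions are asymptotic.
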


\begin{proof}
Note that
\begin{align}\int_{\RRR} | \nabla\psi_n |  
&\leq \int_{\RRR} |\nabla (\psi_n\omega_{\rho_n})| 
+\int_{\RRR} | \nabla\left[ \psi_n(1-\omega_{\rho_n})\right] |   \\
&\leq \int_{\RRR} | \nabla\psi_n |   +2\int_{\RRR}\left |  \psi_n\nabla\omega\left(|x|-\rho_n\right)\right |  dx\\
&\leq \int_{\RRR} | \nabla\psi_n |  +4\int_{B_{\rho_n+1}(0)\setminus B_{\rho_n}(0)} |  \psi_n |dx.\end{align}
Therefore, \eqref{SpltTtlVr} will hold if we find $\{\rho_n\}_{n\in\NN}\subset(0,\infty)$ such that 
\begin{align}\label{smallannulus}
 \int_{B_{\rho_n+1}(0)\setminus B_{\rho_n}(0)} |  \psi_n |  dx\ntends 0.\end{align}

We distinguish between two cases. First, suppose that $\text{supp}\, \psi^0\subset B_R(0)$, for some $R>0$. In this case, we claim that it suffices to choose $\rho_n=R$ for all $n\in\NN$.
Indeed, by $\Loneloc$ convergence and the compact support of $\psi^0$,  
\begin{align} \left | \left |  \psi_n\omega_{\rho_n}\right | \right | _{\Lone}= \left | \left |  \psi_n\omega_{\rho_n}\right | \right | _{ L^1(B_{R+1}(0))}\ntends \|\psi^0\|_{\Lone}.\end{align}
Therefore  each sequence converges pointwise a.e., and \eqref{Prts} holds by the Brezis-Lieb Lemma~\cite{BrezisLieb}. Also, since $\psi^0\mathbbm{1}_{B_{R+1}(0)\setminus B_{R}(0)}\equiv0$ and $\psi_n\ntends \psi^0$ in $ L^1(B_{R+1}(0))$, we conclude that \eqref{smallannulus} is also verified in case $\text{supp}\, (\psi^0)$ is compact.

In the second case, if $\text{supp}\, \psi^0$ is essentially unbounded, note that 
$\|\psi^0\|_{\Lone}<\liminf_{n\to\infty}\| \psi_n\|_{\Lone}$ 
implies that along some subsequence (not relabeled) we may choose $R_n$ such that  
\beq\label{FL1}
\int_{B_{R_n}(0)} |\psi_n| dx  =\|\psi^0\|_{\Lone}.
\eeq  

We claim that, chosen this way,
$R_n\ntends \infty$.  Indeed,  if (along a further subsequence if necessary) we had $R_n\le R_0$ for some $R_0>0$, it would follow from the $L^1_{loc}$ convergence that,
\begin{align}
\|\psi^0\|_{\Lone}=
\liminf_{n\to\infty}\| \psi_n\One_{{B}_{R_n}}\|_{\Lone}
\le \liminf_{n\to\infty} \|\psi_n\One_{B_{R_0}}\|_{\Lone}
=\|\psi^0\One_{B_{R_0}}\|_{ \Lone}<\|\psi^0\|_{\Lone},
\end{align}
since we are assuming that $\text{supp}\, \psi^0$ is essentially unbounded.  Thus, $R_n\ntends \infty$.

Next, fix $R>1$ such that
$$   \int_{B_R(0)} |\psi^0| dx   \ge \frac12 \|\psi^0\|_{\Lone}.  $$
By $\Loneloc$ convergence, for all sufficiently large $n$ we have
\beq\label{FL2}
 \int_{B_R(0)} |\psi_n| dx   \ge \frac14 \|\psi^0\|_{\Lone}.
\eeq

We now claim that for $n$ large enough such that $R_n>R$, there exists $\rho_n\in \left[{R+R_n\over 2}, R_n\right]$ for which
\beq\label{sa2}   \int_{B_{\rho_n+1}(0)\setminus B_{\rho_n}(0)} |\psi_n| dx   \le {3\over R_n-R} \|\psi^0\|_{\Lone}.  
\eeq
If so, then \eqref{smallannulus} is satisfied with this choice of $\rho_n\ge r_n:={R+R_n\over 2}\ntends  \infty$.
To verify the claim, suppose the contrary, and so for every $\rho\in \left[r_n, R_n\right]$ we have the opposite inequality to \eqref{sa2}.  For fixed $n$, choose $K\in\NN$ with 
$R_n-1\le r_n +K < R_n$, so there are $K$ intervals of unit length lying in 
$\left[r_n, R_n\right]$.  Then, by \eqref{FL1}, \eqref{FL2},
\begin{align}
\frac34 \|\psi^0\|_{\Lone} &\ge  \int_{B_{R_n}(0)} |\psi_n|  dx  - \int_{B_R(0)} |\psi_n| dx   \ge
      \int_{B_{r_n+K}(0)\setminus B_{r_n}(0)} |\psi_n| dx    \\
      &> K{3\over R_n-R} \|\psi^0\|_{\Lone} \ge 3{R_n -r_n-1\over R_n-R}\|\psi^0\|_{\Lone}=\frac32{R_n-R-2\over R_n-R}\|\psi^0\|_{\Lone},
\end{align}
for all sufficiently large $n$, a contradiction.  This completes the proof of \eqref{SpltTtlVr}.

To complete the proof of Proposition~\ref{CmpctnssBck}, first  note that  (up to a subsequence), $\psi_n\omega_{\rho_n}\ntends \psi^0$ almost everywhere in $\RRR$, and recall that $\rho_n\leq R_n$.  Hence, from \eqref{smallannulus}, \eqref{FL1} we obtain:
\begin{align} \|\psi^0\|_{\Lone}&\leq\liminf_{n\to\infty}\int_{\RRR}|\psi_n \omega_{\rho_n}|dx\\
&=\liminf_{n\to\infty}
   \left[\int_{B_{\rho_n}(0)} |\psi_n|dx+ \int_{B_{\rho_n+1}(0)\setminus B_{\rho_n}(0)} |\psi_n|dx\right]\\
&=\liminf_{n\to\infty}\int_{B_{\rho_n}(0)} |\psi_n|dx
\leq \liminf_{n\to\infty}\int_{B_{R_n}(0)} |\psi_n|dx =
\|\psi^0\|_{\Lone}.\end{align}
Thus each inequality above is an equality, $\|\psi_n\omega_{\rho_n}\|_{\Lone}\ntends \|\psi^0\|_{\Lone}$, and hence \eqref{Prts} follows from the Brezis-Lieb Lemma~\cite{BrezisLieb}.
\end{proof}

\begin{Rmk}\label{RmkCmpctnssBck}  By lower semicontinuity of the total variation with respect to $L^1$ convergence, up to a subsequence,
\begin{align}\int_{\RRR} | \nabla \psi^0 |  
  \leq \lim_{n\to\infty}\int_{\RRR}\left( | \nabla\psi_n |  
    -  | \nabla(\psi_n-\psi_n\omega_{\rho_n}) | \right)
  \end{align}\end{Rmk}

We are now ready to prove the compactness and $\Gamma$-liminf part of the theorem:
\begin{proof}[Proof of Theorem~\ref{Th1} (i)]   Let $\{u_\eps\}_{\eps>0}$ be a family in $\HM$ with $\EepsV (u_\eps)\le K_0$, $\eps>0$.  

\noindent {\bf Step 1:}  {\it Truncation.}  

First, we show that when proving (i) it suffices to restrict to $u_\eps$ satisfying the pointwise bounds $-1\le u_\eps \le 1$ almost everywhere in $\RRR$.  Indeed, we define the truncations
\begin{align}\label{newustar}u_\eps^*:=\begin{cases}-1,&u_\eps<-1,\\
u_\eps,&  |  u_\eps |  \leq 1,\\
1,&u_\eps>1.\end{cases}\end{align}
We will show that  $\| u_\eps -u^*_\eps\|_{\Ltwo}^2\etends 0$, and
\begin{align}\label{truncated}
\liminf_{\eps\to 0^+}\EepsV (u_\eps^*)\leq\liminf_{\eps\to 0^+}\EepsV (u_\eps).
\end{align}

To accomplish this, we first note that by Remark \ref{RmkTwrdsBV}, we have that
\begin{align}0\leq \int_{\RRR} |  u_\eps-u_\eps^* | ^{2} dx
= \int_{\{ |  u_\eps | >1\}}\left( |u_\eps|- 1\right)^{2} dx
\leq C \int_{\RRR}W(u_\eps)dx \etends0,
\end{align}
where $C$ is a constant independent of $\eps$.    Also by Remark~\ref{RmkTwrdsBV},  $\{u_\eps\}_{\eps>0}$ is bounded in ${\Ltwo}\cap\Ltenthirds$, and hence the sequence of truncations $\{u^*_\eps\}_{\eps>0}$ is as well. By Lemma~\ref{Vlimit}, we conclude that the local potential terms are close,
$$  \int_{\RRR} V  \left( |u_\eps|^2 - |u^*_\eps|^2\right) dx   \etends 0.  $$ 
Finally, each of the other terms decreases under truncation,
$$   |\nabla u^*_\eps |\le |\nabla u_\eps |, \quad
         W(u^*_\eps )\le W(u_\eps ), \quad  D(\vert u^*_\eps\vert^2,\vert u^*_\eps\vert^2)\le D(\vert u_\eps\vert^2, \vert u_\eps\vert^2), $$
and so \eqref{truncated} is verified.

In the following we will therefore assume without loss of generality that $-1\le u_\eps \le 1$, $\eps>0$, almost everywhere in $\RRR$.

\textbf{Step 2:} {\it Passing to the first limit.} 

Let $\phi_\eps:=\Phi(u_\eps)$, where $\Phi:\R\to\R$ is defined by
\begin{align}\label{defphi}\Phi(t):=\int_0^t\sqrt{W(\tau)}d\tau.\end{align} 

Then,
\begin{align}\phi_\eps=\int_{0}^{u_\eps } |  t | (1- |  t | ^{\frac23}) dt=sign(u_\eps )\left(\frac12  |  u_\eps  | ^2-\frac{3}{8} |  u_\eps  | ^{\frac{8}{3}}\right),\end{align}
and  since $\| u_\eps \|_\Linfty\leq 1$,
\begin{align}\label{phiAndu}\frac{1}{8} |  u_\eps  | ^2\leq  | \phi_\eps | \leq\frac12  |  u_\eps  | ^2\quad \text{ and }\quad  | \phi_\eps | \leq\phi_\eps(1)=\frac{1}{8}.\end{align}
In particular, $\|\phi_\eps\|_{\Lone} \le \frac12\|u_\eps \|_{\Ltwo}^2\le {M\over 2}$.  Furthermore, $\{\phi_\eps\}_{0<\eps<\frac14}$ is  bounded in $BV(\RRR)$.  Indeed,  following \cite{ModMort}, by Young's inequality and Lemma \ref{TwrdsBV} with $v_\eps=u_\eps$,
\begin{align}\begin{split}\label{enrgphiu}\int_{\RRR} | \nabla\phi_\eps |  dx&=\int_{\RRR}\sqrt{W(u_\eps )} | \nabla u_\eps  |  dx\leq\int_{\RRR}\left[\frac{\eps}{2} | \nabla u_\eps | ^2+\frac{1}{2\eps}W(u_\eps)\right]dx
\le  K_1,
\end{split} \end{align}
with constant $K_1=K_1(K_0, M,V)$.
Consequently, $\{\|\phi_\eps\|_{\BV}\}_{0<\eps<\frac14}$ is bounded.

Now let $\ek\ktends  0^+ $ be any sequence.  
By the compact embedding of $\BV$ in $\Loneloc$ there exist a subsequence, which we continue to denote by $\ek\ktends  0^+ $, and a function $\phi^0\in\BV$ so that $\phi_{\ek}\ktends \phi^0$ in $\Loneloc$ and almost everywhere in $\RRR$. Moreover, by lower semicontinuity of the total variation,
\begin{align}\label{lscGrad}\int_{\RRR} | \nabla\phi^0 |  \leq\liminf_{k \to\infty}\int_{\RRR} | \nabla\phi_{\ek} |  dx.\end{align} 
Now we can use the invertibility of $\Phi$ and the local uniform continuity of $\Phi^{-1}$ to obtain that $u_{\ek} \ktends  u^0:=\Phi^{-1}(\phi^0)$ almost everywhere in $\RRR$. Then, by Fatou's Lemma and Remark~\ref{RmkTwrdsBV}, we have
\begin{align}0\leq\int_{\RRR}W(u^0)dx\leq\liminf_{k \to\infty}\int_{\RRR}W(u_{\ek})dx=0\end{align}
hence $W(u^0)\equiv0$, $u^0(x)\in \{0,\pm 1\}$ almost everywhere, and 
\begin{align}\label{phi0u0same}\phi^0=\frac{1}{8}u^0\textit{ almost everywhere in $\RRR$}.\end{align} 
As a result, by Fatou's Lemma and \eqref{phiAndu}, for any compact $K\subset\RRR$,
\begin{align}\int_{K} | \phi^0 |  dx=\frac{1}{8}\int_K |  u^0 | ^2 dx\leq \frac{1}{8}\liminf_{k \to\infty}\int_K |  u_{\ek}  | ^2dx\leq\lim_{k \to\infty}\int_{K} | \phi_{\ek} |  dx=\int_{K} | \phi^0 |  dx.
\label{strong}
\end{align}
Thus,  (taking a further subsequence if necessary), $u_{\ek} \ktends  u^0$ pointwise almost everywhere in $\RRR$.   By the Brezis-Lieb Lemma~\cite{BrezisLieb},  we obtain convergence in $\Ltwoloc$, with $\|u^0\|_{\Ltwo}^2\le M$.  Furthermore,  by Fatou's Lemma,
\beq\label{nonloc0}
D(\vert u^0\vert^2 ,\vert u^0\vert^2 ) \le \liminf_{k\to\infty}  D(\vert u_\ek\vert^2 ,\vert u_\ek\vert^2 ),
\eeq 
and  by  Lemma \ref{Vlimit}  (with $u_n=u_{\eps_k}$ and $v_n=u^0$),  \eqref{lscGrad}, \eqref{enrgphiu}, and \eqref{phi0u0same} we have
$$   \EzV(u^0)\le \liminf_{k\to\infty} \EkV(u_\ek).  $$

If  $\phi_{\ek} \ktends  \phi^0$ in the $L^1$ norm, then by the same argument as \eqref{strong} we may conclude that $u_{\ek}\ktends  u^0$ converges in $L^2$ norm, and so 
$m^0:=\|u^0\|_{\Ltwo}^2 =M$, and setting $u^i\equiv 0$ for all $i\ge 1$, the proof is complete.

\medskip

\textbf{Step 3:} {\it Splitting off the remainder sequence.}  \ 
 If $m^0=M$, then $u_{\eps_k}\ktends u^0$ in $\Ltwo$ by the Brezis-Lieb Lemma~\cite{BrezisLieb}, and setting $u^i\equiv 0$ for all $i\ge 1$, the proof is complete. To continue we assume that  
$m^0:=\|u^0\|_{\Ltwo}^2<M$, so the first limit does not capture all of the mass in the sequence $u_{\ek}$.   In this case, both $u_\ek$ and $\phi_{\ek}$  converge only locally (and not in norm), that is  
\begin{align}\label{philoc}\|\phi^0\|_{\Lone}<\liminf_{k \to\infty}\|\phi_{\ek}\|_{\Lone},
\end{align} 
and similarly for $u_\ek$, by the Brezis-Lieb Lemma~\cite{BrezisLieb}.

Applying Proposition \ref{CmpctnssBck} and Remark~\ref{RmkCmpctnssBck} to 
$\phi_{\ek}$, and the fact that we do not have global convergence, there exists a sequence of radii 
$\{\rho_k\}_{k\in\NN}\subset(0,\infty)$ with $\rho_k\ktends\infty$ so that, for  
$$   \phi^0_{\ek}:= \omega_{\rho_k}\phi_{\ek}, \quad \phi^1_{\ek}:= (1-\omega_{\rho_k})\phi_{\ek},
$$
where $\omega_\rho$ is defined in \eqref{OmgEps},
and for a subsequence (which we continue to write as $\ek\ktends  0^+ $),
\begin{gather}\label{SpltPhi}\phi^0_{\ek}\ktends \phi^0 
\quad\textit{ in $\Lone$},\quad 
\phi^1_{\ek}\ktends  0\textit{ in $\Loneloc$,}\\
\label{SpltPhi2}\phi^0_{\ek}\ktends\phi^0\text{ and }\phi^1_{\ek}\ktends0\text{ pointwise almost everywhere in $\RRR$, and}\\
\label{SpltGrdPhi}
\int_{\RRR} | \nabla\phi^0 | 
   +\int_{\RRR}\left | \nabla\phi_{\ek}^1\right |  dx\leq
   \int_{\RRR}\left | \nabla\phi_{\ek}\right |  dx  + o(1).
\end{gather}
Moreover, from \eqref{smallannulus} and \eqref{phiAndu} the mass contained in the cut-off region is negligible: 
\beq\label{smallann}  \lim_{k\to\infty}\int_{B_{\rhk+1}(0)\setminus B_{\rhk}(0)} |\phi_\ek| dx   =0
=  \lim_{k\to\infty}\int_{B_{\rhk+1}(0)\setminus B_{\rhk}(0)} |u_\ek|^2 dx  .
\eeq
We also decompose $u_\ek$ into two pieces, 
\beq\label{usplit}  u^0_\ek=u_\ek\sqrt{\omega_\rhk}, \ \text{and} \ \ u^1_\ek=u_\ek\sqrt{1-\omega_\rhk},
\eeq
so that $(u_\ek)^2 = (u^0_\ek)^2 + (u^1_\ek)^2$ and by the proof of Proposition~\ref{CmpctnssBck}

 $u^1_\ek\ktends0$ almost everywhere in $\RRR$.  Note that $\phi^i_\ek=\Phi(u^i_\ek)$ holds in $\RRR\setminus \{\rhk<|x|<\rhk+1\}$, and by \eqref{smallann} the region where they are no longer explicitly related carries a negligible amount of the mass of $u_\ek$.

Equations \eqref{SpltGrdPhi}, \eqref{phi0u0same} and \eqref{enrgphiu} give
\begin{align}\label{SpltEps}
\frac{1}{8}\int_{\RRR} | \nabla u^0 | +\lim_{k \to\infty}\int_{\RRR} | \nabla\phi_{\ek}^1 |  dx\leq \liminf_{k \to\infty}\int_{\RRR}\left[\frac{\eps}{2} | \nabla u_\eps | ^2+\frac{1}{2\eps}W(u_\eps)\right]dx\le K_0,
\end{align}
and in particular, $\{\phi_\ek^1\}_{k\in\NN}$ is bounded in $\BV$.  
The nonlocal term also splits in the same way. Indeed, by \eqref{usplit}, $u^0_\ek\ktends u^0$ pointwise almost everywhere in $\RRR$, the positivity of $D(f,g)$ for $f,g\ge 0$, and \eqref{nonloc0}
\begin{align}\begin{split}\label{SpltNnlc}
\liminf_{k\to\infty}D(\vert u_{\ek}\vert^2,\vert u_{\ek}\vert^2)&=
  \liminf_{k\to\infty}  D( |u^0_\ek|^2 + |u^1_\ek|^2, |u^0_\ek|^2 + |u^1_\ek|^2)   \\
 &\ge \liminf_{k\to\infty}  D( |u^0_\ek|^2,  |u^0_\ek|^2) +  D( |u^1_\ek|^2, |u^1_\ek|^2)  \\
 &\ge  D( |u^0|^2,  |u^0|^2) + \liminf_{k\to\infty}D( |u^1_\ek|^2, |u^1_\ek|^2).
\end{split}
\end{align}
 Moreover, \eqref{phi0u0same}, Fatou's Lemma, \eqref{phiAndu}, and \eqref{SpltPhi} give
\begin{align}\int_{\RRR} | \phi^0 |  dx&=\frac{1}{8}\int_{\RRR} |  u^0 | ^2dx
\leq\frac{1}{8}\liminf_{k \to\infty}\int_{\RRR} |  u_{\ek}^0 | ^2 dx\\
&\leq\lim_{k \to\infty}\int_{\RRR} |  \phi_{\ek}^0|  dx=\int_{\RRR} | \phi^0 |  dx.\end{align}
thus (taking a further subsequence if necessary),
$u^0_{\ek}\ktends u^0$ in $\Ltwo$.
As a result,
\begin{align}\label{SpltMss}M=m^0+\lim_{k \to\infty} M^1_\ek, \quad\text{where}\quad
M^1_\ek:=\| u_{\ek}^1\|_{\Ltwo}^2=\| u_{\ek}-u^0\|_{\Ltwo}^2 + o(1).
\end{align}
Lastly, as $u_\ek \ktends  u^0$ in $\Ltwoloc$, by Lemma~\ref{Vlimit} we have
$$  \int_{\RRR} V  |u_\ek|^2 dx  =  \int_{\RRR} V  |u^0|^2 dx  + o(1),  $$
and hence  by \eqref{phi0u0same} and \eqref{SpltEps}, we conclude 
\begin{align}\label{SpltEnrg}\Ez^V(u^0)+\liminf_{k \to\infty}\left[\int_{\RRR} | \nabla \phi_{\ek}^1 |  dx+D\left(\vert u_{\ek}^1\vert^2,\vert u_{\ek}^1\vert^2\right)\right] \leq\liminf_{\eps\to0^+}\Eeps^V(u_\eps).\end{align}


\noindent {\bf Step 4:}  \textit{Concentration in the remainder sequence.  }

For any bounded sequence $\{\psi_k\}_{k\in\NN}$ in $\Lone $ we define 
\begin{align}
\MM(\{\psi_k\}):=
\sup\{\| \psi\|_{\Lone}:\exists x_k\in\RRR,\psi_k(\cdot+x_k)\ktends  \psi \textit{ in }\Loneloc\},\end{align}
So $\MM(\{\psi_k\})$ identifies the largest possible $L^1_{loc}$ limiting mass of the sequence, up to translation.  

We claim that for our remainder sequence, $\MM(\{\phi^1_{\ek}\})>0$.  Indeed, this will follow from Proposition~\ref{FL2.1} once we have established the hypotheses.  We first note that by \eqref{SpltEps}, $\{\phi_{\ek}^1\}_{k\in\NN}$ is bounded in $\BV$. 
Next, we must show that the $L^{\frac43 }$ norm of $\phi_{\ek}^1$ is bounded below.  As $u_\ek^1 =u_\ek $ almost everywhere in $\RRR\setminus B_{\rhk+1}(0)$, from Lemma \ref{TwrdsBV} we have
$$  {4}C_0\epk \ge \int_{\RRR\setminus B_{\rhk+1}(0)}  W(u_\ek) dx   = \int_{\RRR\setminus B_{\rhk+1}(0)} W(u^1_\ek) dx  , $$
and thus, from \eqref{phiAndu}, \eqref{smallannulus}, \eqref{SpltMss}, and $t^{\frac83} = (t^{\frac{10}{3}}+t^2)/2 - W(t)/2$,  we have:
\begin{align}\begin{split}\label{L43}
  \int_{\RRR\setminus B_{\rhk+1}(0)}|\phi_\ek^1|^{\frac43 } dx   &\ge {1\over 16} \int_{\RRR\setminus B_{\rhk+1}(0)} |u^1_\ek|^{\frac83 } dx   \\
&\ge  \frac1{32}  \int_{\RRR\setminus B_{\rhk+1}(0)} \left( |u_\ek|^{\frac{10}{3}} + |u_\ek|^2 \right) dx   - 2C_0\ek  \\  &> {1\over 32} \int_{\RRR\setminus B_{\rhk}(0)} |u_\ek|^2 dx   - o(1)\\
& \ge {1\over 32}\int_{\RRR} |u_\ek^1|^2 dx   +o(1) = {M^1_{\ek }\over 32}+o(1)= \frac{1}{32}(M-m^0) +o(1) > 0.  \end{split} \end{align}
Applying Proposition~\ref{FL2.1} the claim follows.

By the claim and Proposition~\ref{FL2.1}, we may choose a subsequence, translations $\{x_k^1\}_{k\in\NN}$, and $\phi^1\in \BV$ with
 $$  \phi_\ek^1(\cdot-x_k^1) \ktends  \phi^1 \ \text{ in $\Loneloc$, and } \ \ \|\phi^1\|_{\Lone} \ge \frac12\MM(\{\phi^1_\ek\}).
 $$ 
Note that since $\phi_\ek^1\ktends  0$ in $\Loneloc$, the sequence $|x_k^1|\ktends \infty.$
By the same arguments as in Step 1 we may conclude that $u^1_\ek(\cdot-x_k^1) \ktends  u^1=8\phi^1$ in $\Ltwoloc$ and almost everywhere in $\RRR$, with $W(u^1 ) \equiv0$ almost everywhere in $\RRR$, and hence $u^1\in\XX$ with $\|u^1\|_{\Ltwo}^2= :  m^1 \le (M-m^0)$.  

Finally, the nonlocal term, which splits as in \eqref{SpltNnlc}, passes to the limit using Fatou's Lemma,
\begin{align}\label{SplitNL2}
D(|u^0|^2,(|u^0|^2) + D(|u^1|^2,|u^1|^2)
&\le  
D(|u^0|^2,|u^0|^2)+\liminf_{k \to\infty}D\left(|u^1_\ek|^2,|u^1_\ek|^2\right) \\
&\leq\liminf_{k \to\infty}D(\vert u_{\ek}\vert^2,\vert u_{\ek}\vert^2).
\end{align}

In conclusion,  using the previous inequality and \eqref{SpltEps}, we have
$$  \EzV(u^0) + \Ezz(u^1) \le \EzV(u^0) + \liminf_{k \to\infty}\left[\int_{\RRR} | \nabla \phi_{\ek}^1 |  dx+D\left(\vert u_{\ek}^1\vert^2,\vert u_{\ek}^1\vert^2\right)\right] \leq\liminf_{\eps\to0^+}\Eeps^V(u_\eps),  $$
with $m^0+m^1\le M$.
If $m^1=\|u^1\|_{\Ltwo}^2= M-m^0$, then $u_\ek^1(\cdot-x_k^1) \ktends  u^1$ in $L^2$ norm  by the Brezis-Lieb Lemma~\cite{BrezisLieb}, and the proof terminates, with $u^i \equiv 0$ for all $i\ge 2$.

\medskip

\noindent
{\bf Step 5:}   \textit{Iterating the argument.}

If $m^0+m^1<M$, then as in Step~3, the convergence of $\phi^1_\ek(\cdot- x_k^1)\ktends  \phi^1$ is only local and not in the norm of $\Lone$ (and similarly for $u^1_\ek(\cdot- x_k^1)\ktends  u^1$  in  $L^2_{loc}$), and so there is again a remainder part to be separated via Proposition~\ref{CmpctnssBck}.  That is, we may choose radii $\{\rhko\}_{k\in\NN}$ going to infinity and further decompose $\phi^1_\ek(\cdot- x_k^1)$,
$$   \phi^1_\ek(\cdot- x_k^1)\omega_{\rhko}\ktends  \phi^1 \ \text{in $L^1$ norm}, \quad
     \phi^2_\ek:= \phi^1_\ek(\cdot- x_k^1)(1-\omega_\rhko)\ktends  0 \ \text{in $\Loneloc$,}  $$
 with the same consequences as in Step~4:  we first  identify a mass center $x_k^2$ for $\phi_\ek^2$ via Proposition~\ref{FL2.1}.  Since both $\phi^1_\ek, \phi^2_\ek \ktends 0$ in $L^1_{loc}(\RRR)$, we must have both $|x_k^2|,|x_k^2-x_k^1|\ktends\infty$.
Translating and passing to a local $L^1$ limit to find $\phi^2=\frac18 u^2$, we obtain a refined lower bound in terms of $u^0, u^1, u^2$.  Iterating this procedure, after $n$ steps, we have $u^0,\dots u^n\in \XX$ with masses $\|u^i\|_{\Ltwo}^2=m^i$, and translations $\{x^i_k\}_{k\in\NN}$ for each $i=1,\dots,n$, 
such that:

%
%
%

\beq \label{kicker} \left.
\begin{gathered}
u_\ek =  u^0 + \displaystyle\sum_{i=1}^n u^i (\cdot-x^i_k) + u^{n+1}_\ek (\cdot-x^n_k), \ \text{and $u^{n+1}_\ek (\cdot-x^n_k)\ktends  0$ in $\Ltwoloc$;} \\
m^i=\|u^i\|_{\Ltwo}^2, \quad i=0,\dots,n;  \\
|x_k^i|\ktends \infty, \   |  x_k^i- x_k^j|\ktends \infty, \quad  1\le i\neq j;  \\
 M= \sum_{i=0}^n m^i + \lim_{k\to\infty} \|u^{n+1}_\ek\|_{\Ltwo}^2;  \\
\Ez^V(u^0)+\displaystyle\sum_{i=1}^n \Ez^0(u^i)\leq\displaystyle\liminf_{\eps\to0^+}\Ez^V(u_\eps).
\end{gathered}
\right\}
\eeq

If for some $n\in\NN$, the remainder $\phi^i_\ek \to 0$ in the $L^1$ norm, then the iteration terminates at that $n$, and the proof (i) of Theorem~\ref{Th1} is completed by choosing $u^i=0$ for all $i\ge n+1$.  If the iteration continues indefinitely, we must verify that the entire mass corresponding to $\{u_\ek\}_{k\in\NN}$ is exhausted by the $\{u^i\}_{i=0}^\infty$.  It is here that we use $\MM(\{\phi^i_\ek\})$.
When localizing mass in the remainder term $\phi^i_\ek$, the translations  $\{x^i_k\}$ and limit  $\phi^i=\frac18 u^i$ are chosen via Proposition~\ref{FL2.1} in such a way that
$\|\phi^i\|_{\Lone} \ge \frac12 \MM \left(\{ \phi^i_\ek\}\right)$, $i=1,\dots,n$.
In this way, the boundedness of the partial sums $\sum_{i=0}^n m^i \le M$ implies that, should the process continue indefinitely, the residual mass 
$\MM \left(\{ \phi^i_\ek\}\right)\le 2m^i\xrightarrow[i\to\infty]{}0$.
We claim that this implies that 
\beq  \label{allmass}  M=\sum_{i=0}^\infty m^i = \sum_{i=0}^\infty \|u^i\|_{\Ltwo}^2,
\eeq
 and that the entire mass corresponding to $\{u_\ek\}_{k\in\NN}$ is exhausted by the $\{u^i\}_{i=0}^\infty$.  Indeed, if $\sum_{i=0}^\infty m^i=M'<M$, then each remainder sequence has $\|\phi^i_{\ek}\|_{\Lone}\ge {M-M'\over 8}$.  Returning to Step~4, and calculating as in \eqref{L43}, we obtain a lower bound up to a subsequence
$$  \int_{\RRR} |\phi^i_\ek|^{\frac43 } dx  \ge C(M-M'),  $$
for a constant $C$ independent of $k, i$.  Using Lemma~\ref{inqltyBV} we then have a uniform lower bound, 
$$  \MM(\{\phi^i_\ek\}) \ge \sup_{a\in\RRR} \int_{B_1(a)} |\phi^i_\ek| dx   \ge C'(M-M')^3,  $$
for each $i\in\NN$, with $C'$ depending on the upper energy bound $K_0$, but independent of $k,i$.  This contradicts $ \MM(\{\phi^i_\ek\})<2m^i\xrightarrow[i\to\infty]{} 0$.  Hence \eqref{allmass} is established, and passing to the limit $n\to\infty$ in \eqref{kicker} we conclude the proof of (i) of Theorem~\ref{Th1}.
\end{proof}

\section{Upper Bound}

In this section we prove part (ii) of Theorem~\ref{Th1}, the construction of recovery sequences in the $\Gamma$-convergence of $\EepsV $.  As the space $\Hz$ consists of a collection of functions in $\XX$, we build the recovery sequence by superposition of each, using the following lemma:

\begin{Lem}\label{UpprBndLmm}Given $v^0\in \BVStep$ with $\| v^0\|_{\Ltwo}^2=M$, there exists $\eps_0=\eps_0(v^0)>0$ and functions $\{v_\eps\}_{0<\eps<\eps_0}\subset\HM$ of compact support, such that 
\begin{align}
 \| v_\eps-v^0\|_{ L^r(\RRR)}\etends 0, \ \forall 1\leq r<\infty,
\quad \text{ and } \quad \Eeps^V(v_\eps)\etends\Ez^V(v^0).
\end{align}
\end{Lem}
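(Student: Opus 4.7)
The plan is a Modica-Mortola type recovery sequence construction adapted to the triple-well potential $W$ and the signed values taken by $v^0$. First I decompose $v^0 = \mathbbm{1}_{\Omega_+} - \mathbbm{1}_{\Omega_-}$ with $\Omega_\pm \subset \mathbb{R}^3$ disjoint sets of finite perimeter, $|\Omega_+|+|\Omega_-|=M$. By standard smooth approximation of sets of finite perimeter, together with continuity of $\mathcal{E}_0^V$ under joint $L^1$ and perimeter convergence of characteristic functions, and a diagonal argument, it suffices to construct $v_\epsilon$ in the case when $\Omega_\pm$ are bounded, have smooth boundary, and have compactly separated closures.

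Second, I build the optimal one-dimensional profile. A direct computation gives $\int_0^1 \sqrt{W(s)}\,ds = \int_0^1 s(1-s^{2/3})\,ds = \tfrac18$, which matches the constant in \eqref{LDen}. Since $W$ vanishes only to finite order at its wells, the true heteroclinic of $q' = \sqrt{W(q)}$ takes infinite time to reach $0$ or $1$, so for a small truncation parameter $\delta > 0$ I produce a monotone $q_\delta \in C^1(\mathbb{R}, [0,1])$ equal to $0$ on $(-\infty, -L_\delta]$, equal to $1$ on $[L_\delta, \infty)$, satisfying $q_\delta' = \sqrt{W(q_\delta)}$ on $\{\delta \le q_\delta \le 1-\delta\}$, and smoothly glued on the outside, so that
\begin{equation*}
\int_{\mathbb{R}}\left[\tfrac12 (q_\delta')^2 + W(q_\delta)\right] dt = \tfrac18 + o_\delta(1).
\end{equation*}
Using signed distance functions $d_\pm$ to $\partial\Omega_\pm$ (positive inside $\Omega_\pm$), I then set
\begin{equation*}
v_\epsilon(x) := q_{\delta(\epsilon)}\bigl(d_+(x)/\epsilon\bigr) - q_{\delta(\epsilon)}\bigl(d_-(x)/\epsilon\bigr),
\end{equation*}
with $\delta(\epsilon) \to 0$ and $\epsilon L_{\delta(\epsilon)}\to 0$; compact separation of $\overline{\Omega_\pm}$ ensures the two terms have disjoint supports for $\epsilon$ small, and $v_\epsilon$ is compactly supported with $|v_\epsilon|\le 1$.

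Third, I verify the four required convergences. Since the transition layer near $\partial\Omega_+\cup\partial\Omega_-$ has volume $O(\epsilon L_{\delta(\epsilon)})$ and $|v_\epsilon|\le 1$, $v_\epsilon \to v^0$ in every $L^r(\mathbb{R}^3)$, $1\le r<\infty$. The coarea formula in tubular neighborhoods of the smooth boundaries $\partial\Omega_\pm$, combined with the equipartition $q_\delta' = \sqrt{W(q_\delta)}$ on the core of the profile, yields
\begin{equation*}
\int_{\mathbb{R}^3}\left[\tfrac{\epsilon}{2}|\nabla v_\epsilon|^2 + \tfrac{1}{2\epsilon}W(v_\epsilon)\right] dx \longrightarrow \tfrac18 \int_{\mathbb{R}^3}|\nabla v^0|.
\end{equation*}
The nonlocal term passes by bilinearity of $D$ and Hardy--Littlewood--Sobolev, since $|v_\epsilon|^2 \to |v^0|^2$ in $L^{6/5}$ with common compact support; the potential term converges by Lemma~\ref{Vlimit} applied with $u_n = v_\epsilon$, $v_n = v^0$, both uniformly bounded in $L^2\cap L^{10/3}$ and with $v_\epsilon\to v^0$ in $L^2_{loc}$. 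To restore the exact mass constraint $\|v_\epsilon\|_{L^2}^2 = M$ (which so far only holds up to $o(1)$), I apply a mild dilation $x\mapsto(1+\alpha_\epsilon)x$ with $\alpha_\epsilon = o(1)$ chosen to enforce equality, and note that each of the four contributions to $\mathcal{E}_\epsilon^V$ depends continuously on $\alpha_\epsilon$ with an unchanged $\epsilon\to 0^+$ limit.

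The main technical obstacle is calibrating $\delta(\epsilon)$: it must go to zero slowly enough that the upper bound on $\tfrac12(q_\delta')^2 + W(q_\delta)$ is not spoiled in the tail regions where the profile is glued by hand rather than satisfying the optimal ODE, yet fast enough that $\epsilon L_{\delta(\epsilon)}\to 0$ and the transition layer has vanishing volume for the $L^r$ and potential convergences. A secondary remark is that Lemma~\ref{UpprBndLmm} produces a recovery sequence for a single $v^0$, whereas Theorem~\ref{Th1}(ii) requires superposing infinitely many such constructions on translates $x_k^i$ diverging at appropriate rates; the compact support conclusion stated here is exactly what will allow that superposition to be carried out without overlap in the subsequent section.
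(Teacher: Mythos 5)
Your proposal follows essentially the same route as the paper's proof: reduce to compactly supported $v^0$, mollify and take level sets to obtain smooth, disjoint, bounded approximants of $\{v^0=\pm1\}$ with converging perimeters, build a Modica--Mortola recovery profile across each boundary, restore the mass constraint by a small dilation, and handle the potential and Coulomb terms via Lemma~\ref{Vlimit} and Hardy--Littlewood--Sobolev, closing with a diagonal argument. The only stylistic difference is that you reconstruct the one-dimensional optimal profile $q_\delta$ and the coarea/tubular-neighborhood estimate explicitly, whereas the paper invokes Sternberg's construction~\cite{Sternberg} as a black box (with the same observation that $\int_0^1\sqrt{W}=\tfrac18$); this is a matter of self-containedness, not a different argument.
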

\begin{proof}  The basic construction is familiar, based on that of Sternberg~\cite[Proof of inequalities (1.12) and (1.13)]{Sternberg}, so we highlight the modifications necessary for our case. The first step is to regularize $v^0$.  As compactly supported functions are dense in the $\BV$ norm, we may assume that $\supp v^0$ is bounded.  Next, define a smooth mollifier, using $\varphi\in C^\infty_0(B_1(0))$, $\varphi(x)\ge 0$, $\int_{B_1(0)} \varphi dx  = 1$ to generate $\varphi_n(x)= n^3\varphi (nx) \in C_0^\infty(B_{\frac1n}(0))$. 
Following the proof of regularization of BV functions (see \cite[Theorem 3.42.]{AmbrosioFuscoPallaraBV}), we create a sequence $w_n=\varphi_n*v^0$ which is smooth and supported in a $\frac1n$-neighborhood of the support of $v^0$.  As in \cite{AmbrosioFuscoPallaraBV}, the regularization is obtained as a level surface of $w_n$.  Here, we have two components, corresponding to the regularizations of $v^0_+$ and $v^0_-$, in case $v^0$ takes on both values $\pm 1$. By Sard's Theorem~\cite[3.4.3.]{FedererGMT}, there exist values $t_+\in (0,1)$ and $t_-\in (-1,0)$ for which the boundaries of the sets
$$    F^{+}_n : =\{ x\in\RRR \, |\,  w_n(x)> t_+>0\}, \quad 
     F^{-}_n : =\{ x\in\RRR \, |\,  w_n(x)< t_-<0\} $$
are smooth for each $n\in\NN$, $v^\pm_n:=\One_{F^\pm_n}\ntends  v^0_\pm$ in $\Lone$, and 
$$   \int_{\RRR}|\nabla v^\pm_n|\ntends \int_{\RRR} |\nabla v^0_\pm|.  $$

Note by this construction that the sets $F^\pm_n$ are smooth and disjoint for each $n$.  Hence, the construction in \cite{Sternberg} may be done separately for the components $F^\pm_n$, for any $0<\eps<\eta_n$, with $\eta_n>0$ being chosen so that the neighborhoods of radius $\sqrt{\eps}$ of the boundaries $F^\pm_n$ are disjoint.  Thus, applying the result of Sternberg~\cite{Sternberg}\footnote{We note that the potential in~\cite{Sternberg} has two wells at $u = \pm1$, whereas our transitions connect $v = 0$ to $v = \pm1$, and so our $\tilde{v}_{n,\eps}^{\pm}=\frac{1}{2}(\rho_\eps+1)$ for $\rho_\eps$ as constructed in~\cite{Sternberg}.} for each $n\in\NN$, and each $0<\eps<\eta_n$, there exists $\tilde v^\pm_{n,\eps}(x)\in \Hone$ with $\tilde v^+_{n,\eps}, \tilde v^-_{n,\eps}$ disjointly supported, $0\leq\tilde v^{\pm}_{n,\eps}\leq1$, and 
\beq\label{S1}
    \|\tilde v^\pm_{n,\eps}- v^\pm_n \|_{\Lone}\etends 0, \quad \text{and} \ 
     \int_{\RRR} \left[ {\eps\over 2} |\nabla \tilde v^\pm_{n,\eps}|^2 + 
    {1\over 2\eps} W(\tilde v^\pm_{n,\eps})\right] \etends  \frac18
      \int_{\RRR} |\nabla v^\pm_n| .
\eeq
Writing $\tilde v_{n,\eps}=\tilde v^+_{n,\eps}-\tilde v^-_{n,\eps}$ (again, a disjoint sum for all $0<\eps<\eta_n$), the same properties \eqref{S1} hold for $\tilde v_{n,\eps}$ and $v^0_n = v^+_n - v^-_n$.  

Next, we adjust the $\tilde v_{n,\eps}$ so that for each $n,\eps$,  each has $L^2$ norm equal to $M$, and hence defines a function in $\HM$.  For this we use dilation:  let $\lambda_\eps:=(\| \tilde v_{n,\eps}\|_{\Ltwo}^2/M)^{\frac13}\etends 1$.  We define the rescaled functions $\hat v_{n,\eps}:\RRR\to\R$ by:
$$\hat v_{n,\eps}(x):=\tilde{v}_{n,\eps}\left(\lambda_\eps x\right), \quad \text{and}\quad
\hat v^\pm_{n}(x):= v^\pm_n(\lambda_\eps x).  
$$
First, by rescaling we have $\|\hat v_{n,\eps}\|_{\Ltwo}^2=M$, and so $\hat v_{n,\eps}\in \HM$ for all $n, \eps$.
Next, we observe that, since the supports $F^\pm_n$ of the components of $v^0_n$ are smooth, 
for $|\lambda_\eps - 1|$ sufficiently small, we may estimate
$$  \| \hat v^0_n - v^0_n\|_{\Lone} \le c |\lambda_\eps^{\frac13}-1|\int_{\RRR}|\nabla v^0_n|.  $$
Hence, we have convergence in the $L^1$ norm,
\begin{align}
0\leq\| \hat v_{n,\eps} - v^0_n\|_{\Lone} &\le
     \| \hat v_{n,\eps} - \hat v^0_n \|_{\Lone} + \|\hat v^0_n - v^0_n \|_{\Lone}   \\
 &\le \lambda_\eps^{-1} \| \tilde v_{n,\eps}- v^0_n \|_{\Lone} + c |\lambda_\eps^{\frac13}-1|\int_{\RRR}|\nabla v^0_n|
 \etends  0.
\end{align}
As each of $|\hat v_{n,\eps} |\le 1$ almost everywhere in $\RRR$, and for fixed $n$ each is of uniformly bounded support, the convergence extends to any $L^r(\RRR)$, $r\ge 1$.
Moreover,
\begin{multline}
 \int_{\RRR} \left[ {\eps\over 2} |\nabla \hat v^\pm_{n,\eps}|^2 + 
    {1\over 2\eps} W(\hat v^\pm_{n,\eps})\right]  dx  
 \\ =   \left[\lambda_\eps^{-\frac13}
      \int_{\RRR} {\eps\over 2} |\nabla \tilde v^\pm_{n,\eps}|^2 dx  
        +\lambda_\eps^{-1}\int_{\RRR} {1\over 2\eps} W(\tilde v^\pm_{n,\eps}) dx \right] \etends  \frac18 \int_{\RRR} |\nabla v^0_n| ,
\end{multline}
which holds for each $n\in \NN$.
As in \cite{Sternberg}, by a diagonal argument, there exists $\eps_0=\eps_0(v^0)>0$ so that for any sequence $\ek\ktends  0^+ $ with $\ek<\eps_0$,  we obtain a sequence $\{v_{\ek}\}_{k\in\NN}$ with
\beq\label{S2}
   \|v_{\ek}- v^0 \|_L^r(\RRR) \ktends 0,r\ge1, \quad\text{and} \ 
    \int_{\RRR} \left[ {\ek\over 2} |\nabla v_\ek|^2 + 
    {1\over 2\ek} W(v_\ek)\right] dx   \ktends 
      \frac18\int_{\RRR} |\nabla v^0_\pm|.
\eeq

 The local potential terms also converge by Lemma~\ref{Vlimit}.  Furthermore, by the Hardy-Littlewood-Sobolev inequality \cite[Theorem 4.3]{LiebLoss}  (with $p=6/5=r$),
\begin{align}\label{NnLcCnt}0&\leq\left\vert D(\vert v_\ek\vert^2,\vert v_\ek\vert^2)-D(\vert v^0\vert^2,\vert v^0\vert^2)\right\vert\\
&=\left\vert D(\vert v_\ek\vert^2-\vert v^0\vert^2,\vert v_\ek\vert^2+\vert v^0\vert^2)\right\vert\\
&\leq \bigl\| \vert v_\ek\vert^2-\vert v^0\vert^2\bigr\|_{L^\frac65(\RRR)}\bigl\| \vert v_\ek\vert^2+\vert v^0\vert^2\bigr\|_{L^\frac65(\RRR)}\ktends0.\end{align}
This completes the proof of Lemma~\ref{UpprBndLmm}.
\end{proof}

\begin{proof}[Proof of (ii) of Theorem~\ref{Th1}] 
If $\{u^i\}_{i=0}^{\infty}$ is a finite collection with $N$ nontrivial components, this follows easily from Lemma~\ref{UpprBndLmm}.  Indeed, for any sequence $\ek\ktends  0^+ $ with 
$\displaystyle 0<u_\ek^i < \min_{i=0,\dots,N}\{ \eps_0(u^i)\}_{i=0,\dots,N}$, we
apply the lemma to find $u_\ek^i$  with $u_\ek^i\ktends u^i$, and
$\EkV(u_\ek^i)\ktends \EzV(u^i)$, $i=0,\dots,N$.  We then define the superposition,
$$ u_\ek(x) =u^0_\ek(x)+ \sum_{i=1}^N u^i_\ek(x-x^i_k) , $$
 with translations $\{x^i_k\}_{k\in\NN}$
which will be chosen such that 
dist$\left(\text{supp}\,(u_\ek^i),\text{supp}\, (u_\ek^j)\right)\to\infty$,
 $\forall i\neq j=0,\dots,N$.  We note that this condition on the translations ensures that the energy $\EkV(u_\ek)$ asymptotically splits,
\begin{align} \EkV(u_\ek) &= \EkV(u_\ek^0) + \sum_{i=1}^N \EkV (u_\ek^i) + o(1), \\
   &=\EkV(u_\ek^0) + \sum_{i=1}^N \Ekz (u_\ek^i) + o(1),
\end{align} 
as 
$D(|u_\ek^i(\cdot-x_\ek^i)|^2, |u_\ek^j(\cdot-x_\ek^j)|^2)\ktends 0$, $\forall i\neq j$ (as a consequence of \eqref{pntwsEstmt}), 
and $V(x)\xrightarrow[| x |\rightarrow \infty]{}0$.

If $\Ui$ has an infinite number of nontrivial elements, we must be more careful.  
In particular, as we go down the list of the $\{u^i\}_{i=0}^\infty$, the characteristic length scale of each $u^i$ gets smaller, and for any particular $\eps>0$ there can only be a finite number of $i$ with $0<\eps<\eps_0(u^i)$, for which the trial functions $u^i_\eps$ can be constructed via Lemma~\ref{UpprBndLmm}.

Take any decreasing sequence $\ek\ktends  0^+ $.  By Lemma~\ref{UpprBndLmm} and part (i) of Theorem~\ref{Th1}, for each $i=0,1,2,\dots$ there exist $\eps^i=\eps_0(u^i)>0$
and a sequence $\{u^i_\ek\}_{k\in\NN}$, defiined for $0<\ek<\eps^i$,
 for which
$$  \|u^i_\ek\|_{\Ltwo}^2= m^i, \quad \EkV(u^i_\ek)\ktends \EzV(u^i), \quad 
\| u^i_\ek-u^i\|_{\Ltwo}\ktends 0.
$$
By taking $\eps^i$ smaller if necessary, we may also assume:
\beq\label{E1} \left.
\begin{gathered}
 \bigl| 
\EkV(u^0_\ek)- \EzV(u^0)\bigr| < {\EzV(u^0)\over 10} \ \text{and} \ 
   \|u^0_\ek - u^0\|_{\Ltwo}^2 <{m^0\over 10}, \quad  \, 0<\ek<\eps^0, \\
     \bigl|\Ekz(u^i_\ek)- \Ezz(u^i)\bigr| < {\Ezz(u^i)\over 10}, \ \text{and} \ 
     \| u^i_\ek - u^i \|_{\Ltwo}^2 < {m^i\over 10}, \quad  \, 0<\ek<\eps^i, \  i=1,2,3,\dots
     \end{gathered}\right\}
\eeq
Again taking $\eps^i$ smaller if necessary, we may assume $0<\eps^i<\eps^{i-1}$.
We now construct $U_{\ek}$ as follows:  for each $k\in\NN$, choose the largest integer $n_k\ge 0$ such that $0<\ek<\eps^i$ for all $i\le n_k$. Note that $n_k\ktends\infty$.  We recall that the $u^i_\ek$ are all compactly supported,  and define $R^i_\ek$ by supp$\, ( u^i_\ek)\subset B_{R_\ek^i}(0)$.  Let 
$$ \bar{R}_\ek=\max_{i=1,\dots,n_k} R^i_\ek.  $$
  Then, we choose the translations $x_k^i\in\RRR$, $i=1,\dots,n_k$, so that
$$\vert x_k^i-x_k^j\vert>\max\left\{ 4\bar{R}_\ek, 2^k\right\} \ktends\infty.$$

Then, we set 
\beq\label{E3}
U_{\ek}(x)  : = u^0_\ek (x) + \sum_{i=1}^{n_k}   u^i_\ek(x-x^i_k),
\eeq
 which is a disjoint sum.  As $V\geq0$, we have 
\beq
\EkV(U_{\ek})
   \leq\EkV(u^0_\ek) + \sum_{i=1}^{n_k} \Ekz(u^i_\ek)
   +\sum_{i,j=1\atop i\neq j}^{n_k}D(|u^i_\ek(\cdot-x_\ek^i)|^2,|u^j_\ek(\cdot-x_\ek^j)|^2).
\eeq

  We claim that the last term on the right side above is negligible.  Indeed,
 for $x\in B_{\bar R_\ek}(x^i_\ek)$ and $x\in B_{\bar R_\ek}(x^j_\ek)$, we have the pointwise estimate,
 \beq\label{pntwsEstmt}
 \left|  {1\over |x-y|} - {1\over |x_\ek^i - x_\ek^j|}\right| 
   \le {2\bar R_\ek \over \left(|x_\ek^i - x_\ek^j| - \bar R_\ek\right)^2}
     \le {4  \bar R_\ek \over  |x_\ek^i - x_\ek^j|^2}
     \le {1 \over  |x_\ek^i - x_\ek^j|}.
 \eeq
 Hence, 
\begin{align}
\sum_{i,j=1\atop i\neq j}^{n_k}
D(|u^i_\ek(\cdot-x_\ek^i)|^2,|u^j_\ek(\cdot-x_\ek^j)|^2) 
&\le  \sum_{i,j=1\atop i\neq j}^{n_k}
     { \|u^i_\ek\|_{\Ltwo}^2 \|u^j_\ek\|_{\Ltwo}^2 \over  |x_\ek^i - x_\ek^j|} \\
     &\le 2^{-k}\sum_{i,j=1\atop i\neq j}^{n_k}
     m^i\, m^j \le  2^{-k}M^2 \ktends 0.
\end{align}
As a result, for any given $\delta>0$, there exists $K\in\NN$ for which
\begin{equation}\label{E4}
\EkV(U_{\ek})\leq\EkV(u^0_\ek) + \sum_{i=1}^{n_k} \Ekz(u^i_\ek)+ {\delta\over 5},
\quad \forall k\ge K.
\end{equation}
Note also that the mass 
$\|U_\ek\|_{\Ltwo}^2 = \sum_{i=0}^{n_k} m^i=:M_k< M$, but it will approach $M$ as $n_k\to\infty$ and components are successively added to the sum.

We next show that 
\beq\label{E90} \limsup_{k\to\infty}\EkV(U_\ek)\leq\FzV(\Ui).
\eeq
 In case $\FzV(\{u^i\}_{i=0}^\infty)=\infty$, (which is possible because the nonlocal terms are not necessarily summable for all $\{u^i\}_{i=0}^\infty\in \Hz$),
there is nothing to prove.  When $\FzV(\{u^i\}_{i=0}^\infty)<\infty$,   choose $N\in\NN$ (which is independent of $k$), for which
\beq\label{E5}
\sum_{i=N+1}^\infty m^i < \delta \ \text{and} \ 
\sum_{i=N+1}^\infty \Ezz(u^i) < {\delta\over 5}.
\eeq
From Lemma~\ref{UpprBndLmm}, taking $K\in\NN$ larger if necessary, we have for all $k\ge K$,
\beq\label{E6}
\sum_{i=0}^N \| u_\ek^i - u^i\|_{\Ltwo}^2+
  \bigl| \EkV(u^0_\ek)-\EzV(u^0)\bigr|+ 
     \sum_{i=1}^N \left|\Ekz(u^i_\ek) - \Ezz (u^i)\right| < {\delta\over 5}.
\eeq
Using \eqref{E4}, \eqref{E6}, \eqref{E5}, and \eqref{E1}, we estimate
\begin{align}\label{E7}
\EkV(U_\ek) - \FzV (\Ui)&\leq\EkV(U_\ek) - \EzV(u^0)-\sum_{i=0}^N \Ezz(u^i)\\
    &\le \EkV(u^0_\ek) - \EzV(u^0) 
     + \sum_{i=1}^N[\Ekz(u^i_\ek)-\Ezz(u^i)] \\
     &\ \ \ + \sum_{i=N+1}^{n_k} |\Ekz(u^i_\ek) - \Ezz(u^i)|
         + \sum_{i=N+1}^{n_k} \Ezz(u^i) +{\delta\over 5}
     \\
    &< {2\delta\over 5} + \frac{11}{10} \sum_{i=N+1}^{n_k}\Ezz(u^i)
    < \delta,
 \end{align}
for all $k\ge K$.  Hence \eqref{E90} is verified.

We next prove that 
\beq\label{E91}
\left\vert\left\vert U_{\ek}-\left(u^0+ \sum_{i=0}^\infty u^i(x-x_k^i)\right)\right\vert\right\vert_{\Ltwo}\ktends0.
\eeq
 For given $\delta>0$, let $N, K\in\NN$ be  as in \eqref{E5} and \eqref{E6}.
Then, for all $k\ge K$, using \eqref{E5}, \eqref{E1}, and \eqref{E6}, we estimate
\begin{align}\label{E9}
\left\|  U_\ek - \left(u^0 + \sum_{i=0}^\infty u^i(x-x_k^i)\right)\right\|_{\Ltwo}
  &\le  \sum_{i=0}^N \| u_\ek^i - u^i\|_{\Ltwo}
   + \sum_{i=N+1}^{n_k} \|u_\ek^i - u^i \|_{\Ltwo}\\
   &\ \ \  + \sum_{i=n_k+1}^\infty \|u^i\|_{\Ltwo} \\
  &\le  {\delta\over 5} + \sum_{i=N+1}^{n_k} {m^i\over 10} + {\delta\over 5} <\delta,
\end{align}

It remains to correct the mass of $U_\ek$, so that each $\|U_\ek\|_{\Ltwo}^2=M$.  This is done as in Lemma~\ref{UpprBndLmm}, dilating each component $u^i_\ek$ by the scaling factor
$\lambda_k=(M_k/M)^{\frac13} \ktends  1$, that is, by setting 
$$ u_\ek(x)=  u^0_\ek ( \lambda_k x) + \sum_{i=1}^{n_k} u^i_\ek (\lambda_k(x-x_k)).  $$
Then $\|u_\ek\|_{\Ltwo}^2 = M$,  $k\in\NN$, $\|u_\ek - U_\ek\|_{\Ltwo}\ktends  0$, and $\vert\EkV(u_\ek)-\EkV(U_\ek)\vert\ktends0$, since $\lambda_k\ktends 1$.
 This concludes the proof of Theorem \ref{Th1}.
\end{proof}

\section{Minimizers}

In this section we examine the connection between minimizers of the liquid drop and TDFW functionals.  The compactness of minimizing sequences being a delicate issue which is shared by the two models.

First, whether the minimum in $\eepsV(M)$ is attained or not, the infimum values converge as $\eps\to 0^+ $:

\begin{Lem}\label{lem:infs}
Assume $V$ satisfies \eqref{HypV}.  Then, for all $M>0$,
$\displaystyle  \eepsV(M) {\xrightarrow[\eps \rightarrow 0^+]{}}\ezV(M).$
\end{Lem}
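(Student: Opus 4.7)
The plan is to prove the two inequalities $\limsup_{\eps \to 0^+} e_\eps^V(M) \leq e_0^V(M)$ and $\liminf_{\eps \to 0^+} e_\eps^V(M) \geq e_0^V(M)$ separately, both by direct appeal to Theorem~\ref{Th1}.

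For the upper bound, I would fix $\delta > 0$ and choose $u^0 \in \mathcal{X}^M$ with $\mathcal{E}_0^V(u^0) < e_0^V(M) + \delta$. The trivial collection $\{u^0, 0, 0, \ldots\}$ lies in $\mathcal{H}_0^M$ with $\mathcal{F}_0^V$-value equal to $\mathcal{E}_0^V(u^0)$. Applying Theorem~\ref{Th1}(ii) along any sequence $\ek \to 0^+$ produces trial functions $u_\ek \in \mathcal{H}^M$ with $\limsup_k \mathcal{E}_\ek^V(u_\ek) \leq \mathcal{E}_0^V(u^0) < e_0^V(M) + \delta$. Since $e_\ek^V(M) \leq \mathcal{E}_\ek^V(u_\ek)$ and $\delta$ is arbitrary, this gives the upper bound.

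For the lower bound, I would take an arbitrary sequence $\ek \to 0^+$ and near-minimizers $u_k \in \mathcal{H}^M$ with $\mathcal{E}_\ek^V(u_k) < e_\ek^V(M) + 1/k$. The upper bound ensures these energies are uniformly bounded, so Theorem~\ref{Th1}(i) yields, along a subsequence, a limiting collection $\{u^i\} \in \mathcal{H}_0^M$ with $\mathcal{F}_0^V(\{u^i\}) \leq \liminf_k e_\ek^V(M)$. To close the argument, I need the binding inequality $\mathcal{F}_0^V(\{u^i\}) \geq e_0^V(M)$ for an arbitrary $\{u^i\} \in \mathcal{H}_0^M$.

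The binding inequality is the main obstacle, and my plan is a direct separation construction. Assuming $\mathcal{F}_0^V(\{u^i\}) < \infty$ (else there is nothing to prove), I would first approximate each $u^i$ by a compactly supported $BV$ function with negligible energy loss, then for each fixed $N$ translate $u^1, \ldots, u^N$ to well-separated centers $x^1, \ldots, x^N$ with $|x^i|, |x^i - x^j| \to \infty$ and form $\tilde u_N := u^0 + \sum_{i=1}^N u^i(\cdot - x^i) \in \mathcal{X}^{M_N}$, where $M_N := \sum_{i=0}^N m^i \leq M$. Over disjoint supports, the perimeter splits additively, the translated potential contributions $\int V |u^i(\cdot - x^i)|^2 \, dx \to 0$ because $V(x) \to 0$ as $|x| \to \infty$, and the nonlocal cross-terms decay like $1/|x^i - x^j|$. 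A final dilation as in Lemma~\ref{UpprBndLmm} corrects the mass from $M_N$ to $M$ with an $o(1)$ change in energy as $M_N \to M$, yielding
\[
e_0^V(M) \leq \mathcal{E}_0^V(u^0) + \sum_{i=1}^N \mathcal{E}_0^0(u^i) + o_N(1).
\]
Since $\mathcal{E}_0^0(u^i) \geq 0$, sending $N \to \infty$ and using monotone convergence gives $e_0^V(M) \leq \mathcal{F}_0^V(\{u^i\})$, which completes the proof.
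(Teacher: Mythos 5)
Your overall structure is the same as the paper's: both directions of the inequality are obtained by direct appeal to Theorem~\ref{Th1}, with near-minimizers fed into part (i) for the lower bound and an approximate minimizer $u^0$ of $\ezV(M)$ (viewed as the trivial collection in $\Hz$) fed into part (ii) for the upper bound. The one genuine difference is your treatment of the inequality $\FzV(\{u^i\})\ge \ezV(M)$. The paper simply asserts this in the chain
$\ezV(M)\le\FzV(\{u^i\})\le\liminf_n\EepsnV(u_{\eps_n})$
without comment in this proof, relying on the binding (subadditivity) inequality $\ezV(M)\le \ezV(m^0)+\sum_{i\ge1}\ezz(m^i)$ for the liquid drop model, which it cites from the literature only later (in the proof of Lemma~\ref{minlem}, where it attributes it to \cite{AlamaBronsardChoksiTopalogluDroplet}). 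You instead give a self-contained derivation via the standard separation construction: approximate each $u^i$ by a compactly supported step function, translate them far apart, use $V\to 0$ and the decay of the Coulomb cross-terms to split the energy, and correct the mass by a small dilation. That sketch is essentially correct and matches the way the binding inequality is typically proved; it buys you self-containment at the cost of having to track the compact-support approximation, the $O(|x^i-x^j|^{-1})$ cross-term estimates, and the continuity of the energy under the mass-fixing dilation, all of which are routine but must be checked. One small caution: the Coulomb cross-terms $D(|u^i(\cdot-x^i)|^2,|u^j(\cdot-x^j)|^2)$ are nonnegative, so they \emph{increase} $\EzV(\tilde u_N)$ rather than help you; this is harmless because they tend to zero, but you should phrase the estimate as an upper bound that includes them (plus $o(1)$), not as a clean additive split.
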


\begin{proof}   The proof is standard. 
 Take any sequence $\eps_n\ntends 0^+$. Then, $\forall n$, $\exists u_{\eps_n}\in\HM$  with $\|u_{\eps_n}\|_{\Ltwo}^2=M$ and $\EepsnV (u_{\eps_n})\le \eepsnV(M)+\eps_n$.
Using $u^0=\One_\BRM$ in Lemma~\ref{UpprBndLmm}, we may conclude that $\{\eepsnV(M)\}_{n\in\NN}$ is bounded, and so by 
 Theorem~\ref{Th1}~(i), $\exists \{u^i\}_{i=0}^\infty \in\Hz$ and a subsequence (not relabelled)  $\eps_n\ntends 0^+ $ with 
$$ \ezV (M)\le \FzV (\{u^i\}_{i=0}^\infty )\le \liminf_{n\to\infty} \EepsnV (u_{\eps_n}) = \liminf_{n\to\infty} e_{\eps_n}^V(M).  $$
For the complementary inequality, for any $\delta>0$, $\exists \{v^i\}_{i=0}^\infty \in\Hz$ with 
$\FzV (\{v^i\}_{i=0}^\infty )< \ezV (M)+\delta$.  Then, by (ii) in Theorem~\ref{Th1}, for any $n\in\NN$, $\exists v_n\in\HM$ with 
$$   \ezV (M)+\delta> \FzV (\{v^i\}_{i=0}^\infty ) \ge \limsup_{n\to\infty } \EepsnV (v_n) 
   \ge \limsup_{n\to\infty } \eepsnV(M).  $$
Putting the above inequalities together, and letting $\delta\to 0^+ $, every sequence $\eps_n\to 0$ contains a subsequences for which $\eepsnV(M)\ntends \ezV(M)$.  As the limit is unique, the lemma follows.\end{proof}

\begin{proof}[Proof of Corollary \ref{cor:super}]
In \cite[Theorems $1$ and $2$]{AlamaBronsardChoksiTopalogluLongRange} it is proven that for $V$ satisfying \eqref{Long},
the minimum for both $\Ez^V$ and $\Eeps^V$ are attained, correspondingly.  Indeed, the proof of these results in \cite{AlamaBronsardChoksiTopalogluLongRange} actually yields the stronger conclusion that \underbar{all} minimizing sequences for either the TDFW or liquid drop functionals are convergent.  Thus, $\forall \eps>0$, $\exists u_\eps\in\HM$ which attains the minimum, $\eepsV(M)=\EepsV (u_\eps)$.  By Lemma~\ref{lem:infs}, $\EepsV (u_\eps)\etends \ezV (M)$, so for any sequence $\eps_n\ntends 0^+ $, 
by Theorem~\ref{Th1}~(i), $\exists \{ u^i\}_{i=0}^\infty\in \Hz$ with 
$$ \FzV (\{ u^i\}_{i=0}^\infty)\le \liminf_{n\to\infty} \mathcal{E}_{\eps_n}^V (u_{\eps_n}) = \ezV (M).  $$
Defining $m^i=:\|u^i\|_{\Ltwo}^2$, we have
\begin{equation}\label{corcontra}  \ezV (M) = \ezV (m^0) + \sum_{i=1}^\infty  \ezz (m^i),  
\end{equation}

 We now claim that $u^i=0$, $\forall i\ge 1$, in which case $u_{\eps_n}\ntends u_0$ in $\Ltwo$, as desired.  Indeed, assume the contrary, $m^1>0$.
We then obtain a contradiction by using Step~6 in the proof of Theorem~1 of \cite{AlamaBronsardChoksiTopalogluLongRange}.  Indeed, by choosing compactly supported $v^0, v^1\in \HM$ whose energies are close to the infima $\ezV (m^0), \ezz(m^1)$ as in Step 6, we obtain the strict subadditivity condition,
$$  \ezV (M) < \ezV (m^0) + \ezz(m^1) + \ezz(M-m^0-m^1) \le \ezV (m^0) + \sum_{i=1}^\infty  \ezz (m^i), 
$$
which contradicts \eqref{corcontra}.
\end{proof}

Analyzing the possible loss of compactness in minimizing sequences for $e^Z_\eps(M)$, $\eps\ge 0$ and $Z\ge 0$, requires the use of concentration-compactness methods~\cite{LionsConcentrationPart1}.  The following are standard results for problems where loss of compactness entails splitting of mass to infinity:
\begin{Lem}\label{conc}  Assume $V$ satisfies \eqref{HypV}.  Then,
for any $\eps\ge 0$ and $M>0$,
\begin{enumerate}
\item[(i)]  If $\forall m^0\in (0,M)$, 
\begin{equation}\label{strictsub} \eepsV(M) < \eepsV(m^0) + e_\eps^0(M-m^0), 
\end{equation}
then all minimizing sequences for $\eepsV(M)$ are precompact  in $\Ltwo$.
\item[(ii)]  If there exist  divergent minimizing sequences for $\eepsV(M)$, then $\exists m^0\in (0,M)$ such that $\eepsV(m^0)$ attains a minimizer and $\eepsV(M) = \eepsV(m^0) + e_\eps^0(M-m^0)$.
\end{enumerate}
\end{Lem}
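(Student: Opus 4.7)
The plan is a standard application of Lions' concentration-compactness principle to a minimizing sequence $\{u_n\}\subset\HM_+$ (replaced by $\Xp$ when $\eps=0$). Working with the densities $|u_n|^2$, for $\eps>0$ Lemma~\ref{TwrdsBV} furnishes uniform $\Ltenthirds$ and Coulomb bounds, while for $\eps=0$ the corresponding BV control on $u_n$ was already obtained in Section~2 via Propositions~\ref{FL2.1} and~\ref{CmpctnssBck}. Along a subsequence, the trichotomy produces one of compactness, vanishing, or dichotomy.

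Vanishing is excluded by $\|u_n\|_{\Ltwo}^2=M>0$ combined with the estimates of the type used in \eqref{L43} (or the $\Lfourthirds$ bound from Lemma~\ref{inqltyBV}), which force a nonzero local $L^2$-accumulation of mass somewhere after translation. In the compactness alternative there are translations $\{y_n\}$ with $u_n(\cdot+y_n)$ converging in $\Ltwo$ to some $u_\infty$ of full mass $M$; either $\{y_n\}$ stays bounded, in which case $u_n$ itself is $\Ltwo$-precompact, or $|y_n|\to\infty$, a degenerate splitting with $m^0=0$ which by Lemma~\ref{Vlimit} gives $e_\eps^V(M)=e_\eps^0(M)$. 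In the dichotomy alternative one obtains $m^0\in(0,M)$, translations $|y_n|\to\infty$, and a decomposition $u_n=v_n^0+v_n^1$ with asymptotically disjoint supports, $\|v_n^0\|_{\Ltwo}^2\to m^0$, $\|v_n^1\|_{\Ltwo}^2\to M-m^0$, and $v_n^1$ concentrated around $y_n$. The local and gradient terms split additively, the nonlocal cross-term vanishes by the pointwise estimate \eqref{pntwsEstmt}, and one obtains
\begin{equation*}
e_\eps^V(M)=\lim \mathcal{E}_\eps^V(u_n)\ge \liminf \mathcal{E}_\eps^V(v_n^0)+\liminf \mathcal{E}_\eps^0(v_n^1(\cdot+y_n))\ge e_\eps^V(m^0)+e_\eps^0(M-m^0);
\end{equation*}
the reverse inequality comes from placing near-minimizers of the two pieces far apart, with a dilation to correct the mass as in Lemma~\ref{UpprBndLmm}, giving equality.

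For part~(i), the strict subadditivity \eqref{strictsub} rules out dichotomy with any $m^0\in(0,M)$, while the escape case $|y_n|\to\infty$ is excluded by the limiting form of \eqref{strictsub} as $m^0\to 0^+$ combined with continuity of $m\mapsto e_\eps^V(m)$ (itself a consequence of Lemma~\ref{UpprBndLmm}). Only the bounded-translation compactness case survives, giving $\Ltwo$-precompactness. For part~(ii), a divergent minimizing sequence cannot be $\Ltwo$-precompact, so dichotomy must occur, yielding the desired $m^0\in(0,M)$ and the equality $e_\eps^V(M)=e_\eps^V(m^0)+e_\eps^0(M-m^0)$. The localized piece $\{v_n^0\}$ is itself an approximate minimizing sequence for $e_\eps^V(m_n^0)$ with $m_n^0\to m^0$, supported in a bounded region; running the compactness step above on $\{v_n^0\}$ produces an $\Ltwo$-convergent subsequence whose limit attains $e_\eps^V(m^0)$.

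The main obstacle is executing the dichotomy splitting for $\eps>0$: one must cut off $u_n$ in $H^1$ via an annular cut-off so that the gradient and Coulomb terms each lose only $o(1)$, and then correct the masses by a small dilation so that $v_n^0,v_n^1\in\HM$ at the prescribed masses. For $\eps=0$ this splitting is already provided by Proposition~\ref{CmpctnssBck}, and the Coulomb cross-term estimate from \eqref{pntwsEstmt} in Section~3 transfers directly. A subsidiary technical point, needed for (i), is the continuity of $m\mapsto e_\eps^V(m)$ and the identification $e_\eps^V(0^+)=0$, required to make rigorous the exclusion of the boundary escape case.
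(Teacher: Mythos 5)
The paper does not actually prove Lemma~\ref{conc}; it defers to \cite[Corollary II.2]{Lions} for the TFDW functional and to \cite[Lemma 6]{AlamaBronsardChoksiTopalogluDroplet} for the liquid drop model, both of which carry out exactly the Lions-type concentration--compactness argument you sketch. Your overall structure --- trichotomy on the density $|u_n|^2$, exclusion of vanishing, annular splitting for dichotomy with $o(1)$ loss in the gradient and Coulomb terms, the binding inequality in both directions, and strict subadditivity to forbid dichotomy --- is the right mechanism and is consistent with what those references do.

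There is, however, a genuine gap in your treatment of the boundary case in which the entire mass translates to infinity (your $|y_n|\to\infty$ case with $m^0=0$). You propose to exclude it ``by the limiting form of \eqref{strictsub} as $m^0\to 0^+$ combined with continuity of $m\mapsto e^V_\eps(m)$ and $e^V_\eps(0^+)=0$.'' That reasoning only yields $e^V_\eps(M)\le e^0_\eps(M)$ in the limit, which is the trivial subadditivity inequality and not a strict one: strict inequality on the open interval $(0,M)$ does not survive passage to the endpoint $m^0=0$. Hence nothing in the hypothesis \eqref{strictsub} rules out $e^V_\eps(M)=e^0_\eps(M)$, and a minimizing sequence could escape entirely without contradicting anything you have assumed. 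The same issue appears in part (ii): a divergent minimizing sequence could a priori escape rather than dichotomize, producing $m^0=0$ rather than $m^0\in(0,M)$. To close this you need either to include $m^0=0$ in the binding hypothesis, i.e.\ to assume $e^V_\eps(M)<e^0_\eps(M)$ (this is how Lions states it), or to verify that strict inequality directly in the applications --- immediate for the atomic potential $V=Z/|x|>0$, where placing a competitor near the nucleus strictly lowers the energy. You flag this point yourself as a ``subsidiary technical point,'' but the fix you propose does not actually work, and the point is load-bearing rather than subsidiary.
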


Statement (ii) is a  slight strengthening of the contrapositive of (i).  The proof for the TFDW functional was done in \cite[Corollary II.2 part (ii)]{Lions}, and for liquid drop models it may be derived from \cite[\color{blue}Lemma 6]{AlamaBronsardChoksiTopalogluDroplet}; although it is stated there for $V$ of a special form, in fact it is true for a much larger class including those satisfying \eqref{HypV}.

Next, we specialize to the atomic case, 
$$    V(x) = {Z\over |x|},  $$
and present the following refinement of the existence result of \cite{LuOttoLiquidDropBackground} for the liquid drop model with atomic potential:

\begin{Prop}\label{prop:LO}
There exists a constant $\mu_0>0$ such that for all $Z\ge 0$ and for all $M\in (0, Z+\mu_0)$:
\begin{enumerate}
\item[(i)] All minimizing sequences for $e^Z_0(M)$ are precompact.
\item[(ii)] The unique minimizer (up to translations if $Z=0$) of $e^Z_0(M)$ is the ball $\BRM(0)$ of radius $r_M=\left({3M\over 4\pi}\right)^{1/3}$.
\end{enumerate}
\end{Prop}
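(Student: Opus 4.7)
The plan is to derive this proposition from the existence and uniqueness theorem of Lu--Otto \cite{LuOttoLiquidDropBackground} combined with the concentration-compactness criterion of Lemma~\ref{conc}(i).

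For part (ii) I would simply quote the Lu--Otto theorem: there exists $\mu_0>0$ (independent of $Z\geq 0$) so that for every $Z\geq 0$ and $M\in(0,Z+\mu_0)$ the centered ball $B_{r_M}(0)$ is the unique (up to translations when $Z=0$) strict minimizer of $e_0^Z(M)$. I would then allow myself to shrink $\mu_0$ further if the argument for part (i) requires it.

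For part (i), I would apply Lemma~\ref{conc}(i) with $\eps=0$, which reduces the precompactness of every minimizing sequence of $e_0^Z(M)$ to the strict subadditivity
\begin{align*}
e_0^Z(M) < e_0^Z(m^0) + e_0^0(M-m^0), \qquad \forall\, m^0\in(0,M).
\end{align*}
Since $m^0\leq M<Z+\mu_0$, part (ii) provides the closed-form expression $e_0^Z(m) = a m^{2/3} - bZ m^{2/3} + c m^{5/3}$ with explicit positive constants $a,b,c$, for both the left-hand side and the first term on the right. For the zero-potential term on the right I would combine the isoperimetric lower bound $e_0^0(m^1)\geq a(m^1)^{2/3}$ with the Kn\"upfer--Muratov result \cite{KnupferMuratov} identifying the ball as the unique minimizer of $e_0^0$ for $m^1$ below a critical mass $M_\ast$, which yields the sharper bound $e_0^0(m^1)\geq a(m^1)^{2/3}+c(m^1)^{5/3}$ in that range.

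Writing $t=m^0/M\in(0,1)$ and dividing by $M^{2/3}$, the strict subadditivity reduces to a pointwise inequality of the form
\begin{align*}
cM\bigl[1 - t^{5/3} - (1-t)^{5/3}\bigr] < a\bigl[t^{2/3} + (1-t)^{2/3} - 1\bigr] + bZ\bigl[1 - t^{2/3}\bigr],
\end{align*}
which I would verify pointwise in $t$. The key analytic observation is that as $t\to 0$ or $t\to 1$ the numerator and denominator of the first two bracketed terms vanish at matching orders, so the ratio $[1-t^{5/3}-(1-t)^{5/3}]/[t^{2/3}+(1-t)^{2/3}-1]$ is uniformly bounded on $(0,1)$, and the inequality will hold once $\mu_0$ is small enough. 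The main obstacle is maintaining uniformity in $Z\geq 0$: for $Z>0$ the attractive Coulomb term $bZ[1-t^{2/3}]$ is a free bonus that only strengthens the inequality, but at $Z=0$ one must rely entirely on the Kn\"upfer--Muratov improvement of the lower bound for $e_0^0(m^1)$, and hence must also impose $\mu_0\leq M_\ast$ so that $m^1 \leq M < \mu_0\leq M_\ast$ automatically lies in the Kn\"upfer--Muratov regime. With $\mu_0$ selected below the Lu--Otto constant, below $M_\ast$, and below the explicit threshold coming from the bounded ratio above, strict subadditivity holds uniformly for all $Z\geq 0$ and $m^0\in(0,M)$, and Lemma~\ref{conc}(i) then delivers the claimed precompactness of every minimizing sequence.
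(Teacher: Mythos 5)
Your proposal correctly reduces part (i) to strict subadditivity via Lemma~\ref{conc}(i) and quotes Lu--Otto for part (ii), which is exactly the overall strategy of the paper. The verification of strict subadditivity, however, is carried out by a genuinely different method, and as written it has a gap.

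The paper does not use any closed-form expression for $e_0^0(m^1)$. Instead it invokes the quantitative isoperimetric inequality (Julin, Fusco--Julin) to derive the excess bound $\EzZ(\mathbbm{1}_\Omega)-\EzZ(\mathbbm{1}_B) > (Z+\mu_0-M)\,\gamma(\Omega)$, where $\gamma$ is an asymmetry functional; it then builds the disjoint competitor $\Omega_\xi=B^0\cup\omega_\xi$ and shows $\gamma(\Omega_\xi)$ is bounded away from zero uniformly in $|\xi|$, from which strict subadditivity follows after sending $|\xi|\to\infty$. This route entirely bypasses the question of whether a ball minimizes $e_0^0(m^1)$, which is essential because $m^1$ need not be small.

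Your argument instead plugs explicit ball energies into the subadditivity inequality. For the right-hand term you need $e_0^0(m^1)\ge a(m^1)^{2/3}+c(m^1)^{5/3}$, which, as you note, requires the Kn\"upfer--Muratov threshold $m^1\le M_\ast$. You then impose $\mu_0\le M_\ast$ so that ``$m^1\le M<\mu_0\le M_\ast$,'' but this chain only holds when $Z=0$; the proposition permits $M$ up to $Z+\mu_0$, which exceeds $M_\ast$ as soon as $Z$ is not small, and then $m^1=(1-t)M$ can exceed $M_\ast$ as well. Dismissing $Z>0$ as a ``free bonus'' misses the point that the enlarged window also makes the left side $cM\bigl[1-t^{5/3}-(1-t)^{5/3}\bigr]$ scale linearly with $Z$, so the bonus and the penalty grow at the same rate and the balance must be checked. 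When $m^1>M_\ast$ you are forced to drop the $(1-t)^{5/3}$ term and prove instead $cM(1-t^{5/3})<a\bigl[t^{2/3}+(1-t)^{2/3}-1\bigr]+bZ\bigl[1-t^{2/3}\bigr]$, which near $t\to 0^+$ degenerates to $cM<bZ$; this does follow from $M<Z+\mu_0$ after a further restriction on $\mu_0$ (using $b/c=5/2>1$ and the fact that $m^1>M_\ast$ forces $Z>M_\ast-\mu_0$), but the case split is not in your write-up and the claimed pointwise inequality is simply not the inequality that must be verified in that regime. This is the concrete gap: your proof covers $Z=0$ but not the full range $M\in(0,Z+\mu_0)$ for $Z>0$.
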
 

\begin{proof}
Statement (ii) is proven in Theorem~2 of \cite{LuOttoLiquidDropBackground}, using Theorem~2.1 in \cite{Julin}.  (The special case $Z=0$ was proven earlier in \cite{KnupferMuratov}.)  We sketch the proof of (i), since we will need certain definitions and estimates for (ii).  As in Julin \cite{Julin}, we define an asymmetry function corresponding to a fixed set $\Om$ of finite perimeter,
$$   \gamma(\Om):= \min_{y\in\RRR} \int_{\RRR} {\mathbbm{1}_{B}(x)-\mathbbm{1}_\Om(x+y)\over |x|} dx,  $$
where $B=\BRM(0)$ is the ball of mass $M$ centered at the origin.  The quantitative isoperimetric inequality (see (2.3) of \cite{Julin} or \cite{FuscoJulin}) then asserts the existence of a universal constant $\mu_0>0$, such that
\begin{equation}\label{QI}
\int_{\RRR} |\nabla\mathbbm{1}_\Om|-\int_{\RRR} |\nabla\mathbbm{1}_B| \ge\mu_0 \gamma(\Om),
\end{equation}
with equality if and only if $\Om$ is a translate of $B$.
Then, as in the proof of Theorem~1.1 of \cite{Julin} in the three-dimensional case, we may estimate the difference in the nonlocal terms by the asymmetry,
$$   D(\mathbbm{1}_B,\mathbbm{1}_B)-D(\mathbbm{1}_\Om,\mathbbm{1}_\Om) \le |B|\gamma(\Om).  $$
The optimality of the ball $B=\BRM$ follows easily from this:  assume $\Om$ is of finite perimeter, with $|\Om|=M$.  Then, provided $\Om$ is not a translate of the ball $B=\BRM$,
\begin{multline}\label{ballbest}   \EzZ(\mathbbm{1}_\Om)-\EzZ(\mathbbm{1}_B) > (\mu_0-M) \gamma(\Om) 
   + Z\left( \int_{\RRR} {\mathbbm{1}_B(x)-\mathbbm{1}_\Om(x)\over |x|} dx\right) \\ \ge \left(Z+\mu_0 - M\right)\gamma(\Om)>0,  
\end{multline}
for all $M<Z+\mu_0$.

To obtain (i), the precompactness of all minimizing sequences, we use the above to establish strict subadditivity of $\ezZ(M)$, as in Lions~\cite{LionsConcentrationPart1}.  Let $M=m^0+m^1$ with $m^0,m^1>0$; we will show that \eqref{strictsub} holds, and then by Lemma~\ref{conc} all minimizing sequences for $\ezZ(M)$ are precompact.


Since $0<m^0<M<Z+\mu_0$, both $\ezZ(M)$, $\ezZ(m^0)$ are attained by balls $B=\BRM(0)$, $B^0=B_{r_{m^0}}(0)$.  For any $\delta>0$ (to be chosen later), we may choose a bounded open set $\omega$ with $0\in\omega$, $|\omega|=m^1$, and $\Ez(\mathbbm{1}_\omega)<\ezz(m^1)+\delta$.  Note that if $m^0\ge Z$, then $0<m^1<\mu_0$ and we may choose $\omega=B^1=B_{r_{m^1}}$ which attains $\ezz(m^1)$.

Define $\omega_\xi:=\omega+\xi$, and $\Om=\Om_\xi = B^0\cup \omega_\xi$, with $|\xi|$ sufficiently large that the union is disjoint.  We first claim that $\exists R>1$ such that $\gamma(\Om_\xi)\ge C>0$ is bounded away from zero for all $\xi$ with $|\xi|>R$, with constant $C=C(m^0,m^1)$.  
 Indeed, for $y\in\RRR$ define
$$  v=v^0+v^1, \quad v^0(y)= \int_{B^0} {dx\over |x-y|}, \quad
    v^1(y) = \int_{\omega_\xi} {dx\over |x-y|}, $$
so that 
$$   \gamma(\Om_\xi) = \int_{B} {dx\over |x|} - \max_{y\in\RRR} v(y).  $$
Hence, to bound $\gamma(\Om_\xi)$ from below we must bound $v(y)$ uniformly from above.
As  $-\Delta v = 4\pi(\mathbbm{1}_{B^0}(y) + \mathbbm{1}_{\omega_\xi}(y))$ in $\RRR$, it attains its maximum at  $y\in \overline{\Om_\xi}=\overline{B^0}\cup \overline{\omega_\xi}$.  Thus, there are two possibilities:  if the maximum occurs at  $y\in \overline{B^0}$, then $v(y)=v^0(y)+ O(|\xi|^{-1})$.  Since $v^0$ is maximized at $y=0$, there exists $C_0=C_0(M,m^0)$ and $R>1$ with  
$$   \gamma(\Om_\xi) \ge \int_{B\setminus B^0} {dx\over |x|} - O(|\xi|^{-1}) \ge C_0>0,  $$
for all $|\xi|>R$.  

In case the maximum of $v$ occurs at $y\in \overline{\omega_\xi}$, then 
$v(y)=v^1(y) + O(|\xi|^{-1})$.  For any domain $D$ with $|D|=m^1$ we have
$$   \int_D {dx\over |x|} \le \int_{B^1} {dx\over |x|}, $$
where $B^1=B_{r_{m^1}}(0)$ is the ball with mass $m^1$.
It follows that
$$  v^1(y) = \int_{\omega_\xi} {dx\over |x-y|}  \le \int_{B^1}{dx\over |x|}.  $$
Therefore, as in the previous case, there exist $C_1=C_1(M, m^1)$ and $R>1$ with
$\gamma(\Om_\xi) \ge C_1>0$,  for all $|\xi|>R$, and the claim is established, with $C=\min\{C_0,C_1\}$.

To conclude, we choose $0<\delta<\frac12 \left(Z+\mu_0 - M\right) C\le \frac12 \left(Z+\mu_0 - M\right)\gamma(\Om_\xi)$, for any $|\xi|>R$, and using \eqref{ballbest},
\begin{align}
\ezZ(M) = \EzZ( \mathbbm{1}_{B}) &<  \EzZ(\mathbbm{1}_{\Om_\xi}) - \left(Z+\mu_0 - M\right)\gamma(\Om_\xi) \\
& \le \EzZ(\mathbbm{1}_{B^0}) + \EzZ(\mathbbm{1}_{\omega_\xi}) - \left(Z+\mu_0 - M\right)\gamma(\Om_\xi) + 
 2\int_{B^0}\int_{\omega_\xi} {dx\, dy \over |x-y|}  \\
&\le  \EzZ(\mathbbm{1}_{B^0}) + \mathcal{E}_0^0(\mathbbm{1}_{\omega}) - \left(Z+\mu_0 - M\right)\gamma(\Om_\xi) +O(|\xi|^{-1}) \\
&\le \ezZ(m^0) + \ezz(m^1) +\delta - \left(Z+\mu_0 - M\right)\gamma(\Om_\xi) +O(|\xi|^{-1}).
\end{align}

Taking $|\xi|$ sufficiently large, \eqref{strictsub} holds for all $M\in (0, Z+\mu_0)$.
\end{proof}

\begin{Rmk}\label{uniquegm}  Thanks to Proposition~\ref{prop:LO}, we may conclude that for the liquid drop model with $V(x)=Z/|x|$ with $0<M<Z+\mu_0$, the {\em unique} generalized minimizer (see Definition~\ref{genmin})  is the singleton $\{u^0=\mathbbm{1}_{\BRM}\}$.  Indeed, this will be true for any functional which satisfies the strict subadditivity condition \eqref{strictsub}.
\end{Rmk}

Next, we prove Theorem~\ref{minlim}.  In fact, we prove the following slightly more general version, which will also be a step towards the proof of Theorem~\ref{almostminlim}.

\begin{Lem}\label{minlem}
Let $M>0$ and $\delta_n,\eps_n\ntends 0$.  Assume $u_n\in\HM$ for which $\EepsnV(u_n)\le \eepsnV(M)+\delta_n$ for each $n\in\NN$.  Then, there exists a subsequence and a generalized minimizer 
$\{u^0, \dots, u^N\}$ of $\EzV $ for which \eqref{Cmpctnss} and \eqref{xfar} hold for $i=0,\dots,N$, and 
$$ \FzV (\{u^i\}_{i=0}^N)=  \ezV(M) = \lim_{n\to\infty}  \eepsnV(M).  $$
\end{Lem}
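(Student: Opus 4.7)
The plan is to combine Theorem~\ref{Th1} with Lemma~\ref{lem:infs} and a small-mass strict subadditivity argument (supplied by Proposition~\ref{prop:LO}) in order to identify the limit of the almost-minimizers as a generalized minimizer in the sense of Definition~\ref{genmin}.

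First I would use Lemma~\ref{lem:infs} together with the hypothesis $\EepsnV(u_n)\le \eepsnV(M)+\delta_n$ to conclude that $\sup_n \EepsnV(u_n)<\infty$ and $\EepsnV(u_n)\to \ezV(M)$. Then Theorem~\ref{Th1}~(i) produces a subsequence, translations $\{x_k^i\}$, and a limit configuration $\{u^i\}_{i=0}^\infty\in\Hz$ satisfying \eqref{Cmpctnss}, \eqref{xfar}, and $\FzV(\{u^i\}_{i=0}^\infty)\le \liminf_n\EepsnV(u_n)=\ezV(M)$. To close the equality I would apply Theorem~\ref{Th1}~(ii) to $\{u^i\}_{i=0}^\infty$ to construct a recovery sequence $v_n\in\HM$ with $\EepsnV(v_n)\to\FzV(\{u^i\}_{i=0}^\infty)$; then $\eepsnV(M)\le \EepsnV(v_n)$ gives $\ezV(M)\le\FzV(\{u^i\}_{i=0}^\infty)$, so that $\FzV(\{u^i\}_{i=0}^\infty)=\ezV(M)$.

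Next, with $m^i:=\|u^i\|_{\Ltwo}^2$, I would show each $u^i$ attains its mass-infimum. The trivial bounds $\EzV(u^0)\ge \ezV(m^0)$ and $\Ezz(u^i)\ge \ezz(m^i)$ give $\ezV(M)=\FzV(\{u^i\}_{i=0}^\infty)\ge \ezV(m^0)+\sum_{i\ge 1}\ezz(m^i)$. For the opposite inequality I would use the standard subadditivity of the infima: for each finite $N$, taking compactly supported near-optimal trial functions for $\ezV(m^0),\ezz(m^1),\ldots,\ezz(m^N)$ together with a tail piece of mass $M-\sum_{i=0}^N m^i=\sum_{i>N}m^i$, placed at mutual distance tending to infinity, yields $\ezV(M)\le \ezV(m^0)+\sum_{i=1}^N\ezz(m^i)+\ezz\bigl(\sum_{i>N}m^i\bigr)$. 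Letting $N\to\infty$ and using $\ezz(m)\to 0$ as $m\to 0^+$ (which follows from the ball bound $\ezz(m)=O(m^{2/3})$ at small masses via Proposition~\ref{prop:LO}) forces equality throughout. In particular each $u^i$ is a minimizer at its mass level, and $\ezV(M)=\ezV(m^0)+\sum_{i\ge 1}\ezz(m^i)$.

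The hard part will be showing that the collection is finite, so that $\{u^0,\ldots,u^N\}$ is a genuine generalized minimizer per Definition~\ref{genmin}. I would argue by contradiction: if infinitely many $u^i$ are nonzero, then $m^i\to 0$, and we may pick $j\neq k$ with $m^j+m^k<\mu_0$, the constant of Proposition~\ref{prop:LO}. By the previous paragraph each of $u^j,u^k$ minimizes $\ezz$ at its mass, hence is (up to translation) the indicator of a ball. Replacing $u^j,u^k$ in the collection by a single ball $\tilde u$ of mass $m^j+m^k$ produces an admissible collection $\{\tilde u^i\}\in\Hz$ satisfying $\FzV(\{\tilde u^i\})-\FzV(\{u^i\}_{i=0}^\infty)=\ezz(m^j+m^k)-\ezz(m^j)-\ezz(m^k)$. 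For masses in $(0,\mu_0)$, the perimeter contribution $\sim m^{2/3}$ dominates the nonlocal contribution $\sim m^{5/3}$, and the strict inequality $(a^3+b^3)^{2/3}<a^2+b^2$ for $a,b>0$ yields $\ezz(m^j+m^k)<\ezz(m^j)+\ezz(m^k)$ at small total mass. Hence $\FzV(\{\tilde u^i\})<\ezV(M)$, contradicting the lower bound $\ezV(M)\le \FzV$ on every admissible collection established in the second paragraph. Thus only finitely many $u^i$ are nonzero, completing the proof.
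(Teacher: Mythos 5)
Your proof is correct and follows the same overall structure as the paper's: use Lemma~\ref{lem:infs} and Theorem~\ref{Th1} parts (i) and (ii) to get $\FzV(\{u^i\})=\ezV(M)=\lim_n\eepsnV(M)$, then combine the trivial bounds $\EzV(u^0)\ge\ezV(m^0)$, $\Ezz(u^i)\ge\ezz(m^i)$ with the binding (subadditivity) inequality to force equality and conclude each component is minimizing, and finally use strict subadditivity of $\ezz$ at small masses to rule out infinitely many nonzero components. The one place you genuinely diverge is in how strict subadditivity is obtained: the paper reuses the quantitative-isoperimetric argument that proves Proposition~\ref{prop:LO}, whereas you exploit the fact that (again by Proposition~\ref{prop:LO}) small-mass $\ezz$-minimizers are balls, so $\ezz(m)=c_1 m^{2/3}+c_2 m^{5/3}$ and the strictly subadditive $m^{2/3}$ perimeter term beats the superadditive $m^{5/3}$ nonlocal term once the masses are small. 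Both are valid; yours is more elementary and explicit in the ball regime, while the paper's route is what actually yields the universal $\mu_0$ and works uniformly for $Z>0$. One small precision worth spelling out: in the contradiction step you need $\ezV(M)\le\FzV(\{\tilde u^i\})$ for the \emph{modified} configuration, which follows either from the binding inequality applied to $\{\tilde u^i\}$ directly (as the paper does) or by running your recovery-sequence argument again for $\{\tilde u^i\}$; as written you only derived it for the original $\{u^i\}$.
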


\begin{proof}
By (i) of Theorem~\ref{Th1}, there exists a subsequence along which $u_n$ decomposes as in \eqref{Cmpctnss}, with $\{u^i\}_{i=0}^\infty\in\Hz$ satisfying \eqref{LwrBnd}.  By (ii) of Theorem~\ref{Th1} the upper bound construction provides sequences $v_{\eps_n}\in\HM$ yielding the opposite inequality, 
$$\FzV (\{u^i\}_{i=0}^\infty)\ge\limsup_{n\to\infty}\EepsnV(v_n)\ge \lim_{n\to\infty} \eepsnV(M). 
$$
  Hence, by Lemma \ref{lem:infs} we have
$$  \FzV (\{u^i\}_{i=0}^\infty) = \lim_{n\to\infty} \EepsnV(u_n) = \lim_{n\to\infty}  \eepsnV(M) = \ezV(M).  $$
Let $m^i=\|u^i\|_{\Ltwo}^2$.  
It suffices to show that $u^0$ minimizes $\ezV(m^0)$ and $u^i$ minimizes $\ezz(m^i)$, for each $i\geq1$, and that all but a finite number of the $u^i\equiv 0$.  First, by \eqref{GmmLmmt} we have
\begin{multline}   \ezV(m^0)+\sum_{i=1}^\infty \ezz(m^i)\\ \le 
     \EzV(u^0)+ \sum_{i=1}^\infty \Ez^0(u^i) = \FzV (\{u^i\}_{i=0}^\infty) =  \ezV(M) 
     \le  \ezV(m^0)+\sum_{i=1}^\infty \ezz(m^i),  
\end{multline}
the last step by the Binding Inequality (subadditivity) of $e_0$ see e.g. \cite{AlamaBronsardChoksiTopalogluDroplet}.)
As each term is non-negative, equality holds in each relation. Furthermore, as $ \ezV(m^0)\le  \EzV (u^0)$ and each $\ezz(m^i)\le \Ez^0(u^i)$, we must have equality in these as well.  This proves that each $u^i$, $i\ge 0$, is minimizing.

Finally, suppose infinitely many $u^i\not\equiv 0$.  Then, by the convergence of the series, $0<m^i<\mu_0$ for all but finitely many $i$;  assume $0<m^j, m^{j+1}<\mu_0$.  Then,   as in the proof of Proposition~\ref{prop:LO},
we obtain the strict subadditivity condition,  $\ezz(m^j) + \ezz(m^{j+1})> \ezz(m^j+m^{j+1})$.   But then
$$
 \ezV(M) = \ezV(m^0)+ \sum_{i=1}^\infty \ezz(m^i) >  \ezV(m^0)+ \sum_{i\neq j,j+1} \ezz(m^i) + \ezz(m^j+m^{j+1}) \ge  \ezV(M),
$$
a contradiction.
\end{proof}

We finish with the proof of Theorem~\ref{almostminlim}.  

\begin{proof} 
 Recall that we assume $V(x)=Z/|x|$, $Z>0$.  For (a), $0<M\le Z$, the (relative) compactness of all minimizing sequences for $e^Z_\eps(M)$ was proven by Lions~\cite[\color{blue}Corollary II.2.]{Lions}.  
  Take any sequence $\eps_n\to 0$ and let $u_n\in\HM$ with $\EepsnZ(u_n)=\eepsnZ(M)$.  By Lemma~\ref{minlem}, there exists a generalized minimizer of $\ezZ(M)$, $\{u^i\}_{i=0}^N$, such that \eqref{Cmpctnss} and \eqref{xfar} hold for $i=0,\dots,N$,  and a subsequence, for which
$$ \FzZ (\{u^i\}_{i=0}^N)=  \ezZ(M)= \lim_{n\to\infty}  \eepsnZ(M).  $$
By Remark~\ref{uniquegm}, $N=0$ and  $u_n\ntends u^0$ in $\Ltwo$, which attains the minimum in $e^Z_0(M)$.  As $u^0=\mathbbm{1}_{\BRM}$ is unique, the limit exists for any sequence $\eps\to 0$.

For (b), first note that if there is a sequence $\eps_n\ntends 0^+ $ for which $e_{\eps_n}^Z(M)$ attains its minimum at $u_n\in \HM$, then by the same argument as for (a) we obtain the conclusion of the Theorem with $M_{\eps_n}=M$.  It therefore suffices to consider sequences $\eps_n\ntends 0^+ $ for which the minimum in $e_{\eps_n}^Z(M)$ is not attained.  By part (ii) of Lemma~\ref{conc}, for each $n$ there exists $m^0_n\in (0,M)$ such that  
$$e^Z_{\eps_n}(M) = e^Z_{\eps_n}(m^0_n) + e^0_{\eps_n}(M-m^0_n),  $$
and there exists $u_n\in H^1(\RRR)$ with $\|u_n\|_{\Ltwo}^2=m^0_n$ and  $\EepsnZ(u_n)= e^Z_{\eps_n}(m^0_n)$.  For each $n$, we may choose $v_n\in H^1(\RRR)$ with compact support and $\|v_n\|_{\Ltwo}^2=M-m^0_n$ and for which $\mathcal{E}^0_{\eps_n}(v_n)< e^0_{\eps_n}(M-m^0_n) +\eps_n$.  Next, choose radii $\rho_n$ in the smooth cut-off $\omega_{\rho_n}$ defined in \eqref{OmgEps}, such that $\tilde u_n=u_n\omega_{\rho_n}$ satisfies both
$\|\tilde u_n- u_n\|_{\Ltwo}^2\ntends 0$ and $\vert\EepsnZ(\tilde u_n)-\EepsnZ(u_n)\vert\ntends 0$.  We also choose $\xi_n\in\RRR$ such that $\tilde u_n$ and $v_n(\cdot+\xi_n)$ have disjoint supports for each $n$, and $|\xi_n|\ntends \infty$.  Set $U_n(x)=\tilde u_n(x) + v_n(\cdot + \xi_n)$, so that
\beq \label{Un}   \|U_n\|_{\Ltwo}^2 = \|\tilde u_n\|_{\Ltwo}^2 + \|v_n\|_{\Ltwo}^2 \ntends M, \quad\text{and}\quad
        \vert\EepsnZ(U_n) - \eepsnZ(M)\vert \ntends 0.  
\eeq        
By Lemma~\ref{lem:infs}, $\EepsnZ(U_n) \ntends e^Z_0(M)$, so applying (i) of Theorem~\ref{Th1} 
there exists $\{u^i\}_{i=0}^\infty\in \Hz$ for which \eqref{Cmpctnss} and \eqref{xfar} hold, and 
$$ \ezZ(M)\le\FzZ (\{u^i\}_{i=0}^\infty)\le \liminf_{n\to\infty} \EepsnZ(U_n) = \ezZ(M).  $$
Thus, $\FzZ (\{u^i\}_{i=0}^\infty)= \ezZ(M)$.  By Remark~\ref{uniquegm}, $u^i\equiv 0$ for all $i\ge 1$ and $u^0=\mathbbm{1}_{\BRM}$ minimizes $\ezV (M)$.  From \eqref{Cmpctnss} we conclude that $U_n=\tilde u_n + v_n (\cdot+\xi_n)\ntends u^0$ in $L^2(\RRR)$.  Since for every fixed compact set $K\subset\RRR$ we have $U_n =u_n $ almost everywhere in $K$ and for all sufficiently large $n$, it follows that $u_n\ntends u^0$ in $\Ltwoloc $ and pointwise almost everywhere up to a subsequence.
Fix the compact set $K$ with $\BRM\Subset K$.  Then,
$$ M = \int_K |u^0|^2 \le \liminf_{n\to\infty} \int_K |u_n|^2\, dx \le  \liminf_{n\to\infty} m_n^0 \le M.  $$
Each of the above quantities is therefore equal, and $\lim_{n\to\infty} m_n^0=\lim_{n\to\infty} \|u_n\|_{\Ltwo}^2=M$.
  Consequently, we have both $u_n\ntends u^0$ and  $v_n\ntends 0$ globally in $\Ltwo$.  In conclusion, taking $M_{\eps_n}=m_n^0$, $\eepsnZ(M_{\eps_n}=m_n^0)$ is attained at $u_{\eps_n}=u_n$, $M_{\eps_n}\ntends M$, and $u_n\ntends u^0=\mathbbm{1}_{\BRM}$ in $\Ltwo $.
\end{proof}

\vspace{1cm}
{\begin{center}\textbf{Acknowledgments}\end{center}}
The authors were supported by NSERC (Canada) Discovery Grants.

\end{document}